\newtheorem{theorem}{Theorem}[section]
\newtheorem{corollary}[theorem]{Corollary}
\newtheorem{lemma}[theorem]{Lemma}
\newtheorem{proposition}[theorem]{Proposition}
\newtheorem{conjecture}[theorem]{Conjecture}
\theoremstyle{definition}
\newtheorem{definition}[theorem]{Definition}
\newtheorem{example}[theorem]{Example}
\theoremstyle{remark}
\newtheorem{remark}[theorem]{Remark}
\numberwithin{equation}{section}
\renewcommand{\p@enumii}{}
\newcommand{\RR}{\mathbb{R}}
\newcommand{\QQ}{\mathbb{Q}}
\newcommand{\NN}{\mathbb{N}}
\newcommand{\bB}{\mathbf{B}}
\DeclareMathOperator{\diam}{diam}
\def\<#1>{\langle #1 \rangle}
\newcommand{\myeq}{\stackrel{r}{=}}
\newcommand{\Lim}{\lim\limits}
\date{}
\journal{}
\begin{document}

\begin{frontmatter}
\title{The range of ultrametrics, compactness, and separability}

\author[1]{Oleksiy Dovgoshey\corref{cor1}}
\ead{oleksiy.dovgoshey@gmail.com}
\cortext[cor1]{Corresponding author}

\author[2]{Volodymir Shcherbak}
\ead{scherbakvf@ukr.net}

\address[1]{Department of Theory of Functions, Institute of Applied Mathematics and Mechanics of NASU, Dobrovolskogo str. 1, Slovyansk 84100, Ukraine}

\address[2]{Department of Applied Mechanics, Institute of Applied Mathematics and Mechanics of NASU, Dobrovolskogo str. 1, Slovyansk 84100, Ukraine}

\begin{abstract}
We describe the order type of range sets of compact ultrametrics and show that an ultrametrizable infinite topological space \((X, \tau)\) is compact iff the range sets are order isomorphic for any two ultrametrics compatible with the topology \(\tau\). It is also shown that an ultrametrizable topology is separable iff every compatible with this topology ultrametric has at most countable range set.
\end{abstract}

\begin{keyword}
totally bounded ultrametric space \sep order type of range set of ultrametric \sep compact ultrametric space \sep separable ultrametric space

\MSC[2020] Primary 54E35 \sep Secondary 54E45
\end{keyword}

\end{frontmatter}

\section{Introduction}

In what follows we write \(\RR^{+}\) for the set of all nonnegative real numbers, \(\QQ\) for the set of all rational number, and \(\NN\) for the set of all strictly positive integer numbers.

\begin{definition}\label{d1.1}
A \textit{metric} on a set \(X\) is a function \(d\colon X\times X\rightarrow \RR^{+}\) satisfying the following conditions for all \(x\), \(y\), \(z \in X\):
\begin{enumerate}
\item \((d(x,y) = 0) \Leftrightarrow (x=y)\);
\item \(d(x,y)=d(y,x)\);
\item \(d(x, y)\leq \bigl(d(x, z) + d(z, y)\bigr)\).
\end{enumerate}
A metric space is a pair \((X, d)\) of a set \(X\) and a metric \(d\colon X\times X\rightarrow \RR^{+}\). A metric \(d\colon X\times X\rightarrow \RR^{+}\) is an \emph{ultrametric} on \(X\) if we have
\begin{equation}\label{d1.1:e3}
d(x,y) \leq \max \{d(x,z),d(z,y)\}
\end{equation}
for all \(x\), \(y\), \(z \in X\). Inequality~\eqref{d1.1:e3} is often called the \emph{strong triangle inequality}. 
\end{definition}

For every ultrametric space \(X\), each triangle in \(X\) is isosceles with the base being no greater than the legs. The converse statement also is valid: ``If \(Y\) is a metric space and each triangle in \(Y\) is isosceles with the base no greater than the legs, then \(Y\) is an ultrametric space.''.

The ultrametric spaces are connected with various of investigations in mathematics, physics, linguistics, psychology and computer science. Some properties of ultrametrics have been studied in~\cite{DM2009, DD2010, DP2013SM, GroPAMS1956, Lemin1984RMS39:5, Lemin1984RMS39:1, Lemin1985SMD32:3, Lemin1988, Lemin2003, Qiu2009pNUAA, Qiu2014pNUAA, BS2017, DM2008, DLPS2008TaiA, KS2012, Vau1999TP, Ves1994UMJ, Ibragimov2012, GH1961S, PTAbAppAn2014, Dov2019pNUAA, DP2020pNUAA, DovBBMSSS2020, VauAMM1975, VauTP2003}. The use of trees and tree-like structures gives a natural language for description of ultrametric spaces \cite{Carlsson2010, DLW, Fie, GV2012DAM, HolAMM2001, H04, BH2, Lemin2003, Bestvina2002, DDP2011pNUAA, DP2019PNUAA, DPT2017FPTA, PD2014JMS, DPT2015, Pet2018pNUAA, DP2018pNUAA, DKa2021, Dov2020TaAoG}.

Let \((X, d)\) be a metric space. An \emph{open ball} with a \emph{radius} \(r > 0\) and a \emph{center} \(c \in X\) is the set 
\[
B_r(c) = \{x \in X \colon d(c, x) < r\}.
\]
Write \(\bB_X = \bB_{X, d}\) for the set of all open balls in \((X, d)\).

\begin{definition}\label{d1.3}
Let \(\tau\) and \(d\) be a topology and, respectively, an ultrametric on a set \(X\). Then \(\tau\) and \(d\) are said to be compatible if \(\bB_{X, d}\) is an open base for the topology \(\tau\).
\end{definition}

Definition~\ref{d1.3} means that \(\tau\) and \(d\) are compatible if and only if every \(B \in \bB_{X, d}\) belongs to \(\tau\) and every nonempty \(A \in \tau\) can be written as the union of some family of elements of \(\bB_{X, d}\). If \((X, \tau)\) admits a compatible with \(\tau\) ultrametric on \(X\), then we say that the topological space \((X, \tau)\) (the topology \(\tau\)) is \emph{ultrametrizable}. Necessary and sufficient conditions under which topological spaces are ultrametrizable were found by De Groot \cite{GroPAMS1956, GroCM1958}. See also \cite{CLTaiA2020, BMTA2015, KSBLMS2012, BriTP2015, CSJPAA2019} for some future results connected with ultrametrizable topologies.

We define the \emph{distance set} \(D(X)\) of an ultrametric space \((X,d)\) as the range of the ultrametric \(d\colon X\times X\rightarrow \RR^{+}\),  
\[
D(X) = D(X, d) = \{d(x, y) \colon x, y \in X\}.
\]

In the present paper we focus on interconnections between ultrametrizable topologies and related properties of distance sets of ultrametrics which are compatible with these topologies.

The paper is organized as follows. 

Section~\ref{sec2} contains all necessary basic facts about metric spaces and some concepts related to totally ordered sets. 

Theorem~\ref{t5.10} of Section~\ref{sec3} describes the distance sets of totally bounded ultrametric spaces. In particular, it is shown that the distance sets are order isomorphic for all infinite totally bounded ultrametric spaces.

The main results of the paper are Theorems~\ref{t4.5} and \ref{t4.8} from Section~\ref{sec4}. Theorem~\ref{t4.5} claims that an ultrametrizable infinite topological space \((X, \tau)\) is compact iff any two compatible with \(\tau\) ultrametrics have the order isomorphic distance sets. In Theorem~\ref{t4.8} we prove that an ultrametrizable topological space \((X, \tau)\) is separable iff every compatible with \(\tau\) ultrametric has at most countable distance set.

\section{Basic definitions and facts}
\label{sec2}

\subsection{Metrics and ultrametrics}
\label{sec2.1}

Let \((X, d)\) be a metric space. A sequence 
\[
(x_n)_{n \in \mathbb{N}} \subseteq X
\]
is a \emph{Cauchy sequence} in \((X, d)\) if, for every \(r > 0\), there are \(x \in X\) and an integer \(n_0 \in \NN\) such that \(x_n \in B_r(x) \in \bB_{X}\) for every \(n \geqslant n_0\), i.e., \((x_n)_{n \in \mathbb{N}}\) eventually belongs to \(B_r(x)\).

\begin{remark}\label{r2.17}
Here and later the symbol \((x_n)_{n\in \mathbb{N}} \subseteq X\) means that \(x_n \in X\) holds for every \(n \in \mathbb{N}\).
\end{remark}

\begin{definition}\label{d2.2}
A sequence \((x_n)_{n \in \mathbb{N}}\) of points in a metric space \((X, d)\) is said to \emph{converge} to a point \(a \in X\),
\begin{equation*}
\lim_{n \to \infty} x_n = a,
\end{equation*}
if \((x_n)_{n \in \mathbb{N}}\) eventually belongs to every open ball \(B\) containing \(a\). 
\end{definition}

It is clear that every convergent sequence is a Cauchy sequence.

Now we recall a definition of \emph{total boundedness}.

\begin{definition}\label{d2.10}
A subset \(A\) of a metric space \((X, d)\) is totally bounded if for every \(r > 0\) there is a finite set \(\{B_r(x_1), \ldots, B_r(x_n)\} \subseteq \bB_X\) such that
\[
A \subseteq \bigcup_{i = 1}^{n} B_r(x_i).
\] 
\end{definition}

The next basic concept is the concept of \emph{completeness}.

\begin{definition}\label{d2.6}
A subset \(A\) of metric space \((X, d)\) is \emph{complete} if, for every Cauchy sequence \((x_n)_{n \in \mathbb{N}} \subseteq A\), there is a point \(a \in A\) such that 
\[
\lim_{n \to \infty} x_n = a.
\]
\end{definition}

An important subclass of complete metric spaces is the class of \emph{compact} metric spaces.

\begin{definition}[Borel---Lebesgue property]\label{d2.3}
Let \((X, d)\) be a metric space. A subset \(A\) of \(X\) is compact if every family \(\mathcal{F} \subseteq \bB_X\) satisfying the inclusion
\[
A \subseteq \bigcup_{B \in \mathcal{F}} B
\]
contains a finite subfamily \(\mathcal{F}_0 \subseteq \mathcal{F}\) such that 
\[
A \subseteq \bigcup_{B \in \mathcal{F}_0} B.
\]
\end{definition}

A standard definition of compactness of a topological space \((X, \tau)\) usually formulated as: Every open cover of \(X\) has a finite subcover.

A proof of the following well-known result can be found, for example, in~\cite[Theorem~12.1.3]{Sea2007}.

\begin{proposition}\label{p2.9}
Let \((X, d)\) be a metric space. Then the following conditions are equivalent for every \(A \subseteq X\):
\begin{enumerate}
\item \label{p2.9:s1} \(A\) is compact.
\item \label{p2.9:s2} Every infinite sequence of points of \(A\) contains a subsequence which converges to a point of \(A\).
\item \label{p2.9:s3} \(A\) is complete and totally bounded.
\end{enumerate}
\end{proposition}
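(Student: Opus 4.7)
The plan is to establish the cyclic chain of implications (i) $\Rightarrow$ (ii) $\Rightarrow$ (iii) $\Rightarrow$ (i), which is the standard route for this equivalence.

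For (i) $\Rightarrow$ (ii), I would argue by contradiction. Given an infinite sequence $(x_n)_{n \in \NN} \subseteq A$, if its range is finite then some value is attained infinitely often and yields a constant convergent subsequence with limit in $A$. Otherwise, suppose no subsequence converges to a point of $A$. Then for every $a \in A$ there exists $r(a) > 0$ such that $B_{r(a)}(a)$ contains only finitely many terms of $(x_n)_{n \in \NN}$, for otherwise $B_{1/k}(a)$ would contain infinitely many terms for every $k \in \NN$ and a diagonal extraction would produce a subsequence converging to $a \in A$. The family $\{B_{r(a)}(a) : a \in A\} \subseteq \bB_X$ covers $A$, so the Borel--Lebesgue property yields a finite subcover, forcing $(x_n)_{n \in \NN}$ to take only finitely many values — contradicting the assumption that the range is infinite.

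For (ii) $\Rightarrow$ (iii), I would handle completeness and total boundedness separately. If $(x_n)_{n \in \NN} \subseteq A$ is Cauchy, then (ii) supplies a subsequence converging to some $a \in A$, and a routine $\varepsilon/2$ argument promotes this to convergence of the whole sequence to $a$, yielding completeness. If $A$ failed to be totally bounded, some $r > 0$ would admit no finite covering of $A$ by open balls of radius $r$, and one could inductively pick $x_n \in A \setminus \bigcup_{i < n} B_r(x_i)$ so that $d(x_i, x_j) \geq r$ for all $i \neq j$; such a sequence has no Cauchy subsequence, and hence no convergent subsequence, contradicting (ii).

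The main obstacle will be (iii) $\Rightarrow$ (i), where total boundedness and completeness must be combined via a nested-sets construction. Assume $\mathcal{F} \subseteq \bB_X$ covers $A$ but admits no finite subcover. Using Definition~\ref{d2.10} at radius $1/2^n$, I would inductively produce a decreasing chain $A_0 \supseteq A_1 \supseteq A_2 \supseteq \cdots$ of subsets of $A$ such that each $A_n$ is contained in some open ball of radius $1/2^n$ and no $A_n$ is finitely covered by $\mathcal{F}$; the inductive step is possible because if $A_n$ were finitely covered by $\mathcal{F}$ for every piece of a finite cover of $A_{n-1}$ by $1/2^n$-balls, then so would be $A_{n-1}$. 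Since $\diam(A_n) \leq 2/2^n \to 0$, any selection $x_n \in A_n$ yields a Cauchy sequence, which by completeness converges to some $a \in A$. Choosing $B \in \mathcal{F}$ with $a \in B$ and noting that $B$ is an open ball, we get $A_n \subseteq B$ for all sufficiently large $n$, contradicting the non-finite-coverability of $A_n$. The delicate point in this argument is the bookkeeping of the recursive construction — in particular, verifying that some piece of each finite $1/2^n$-cover of $A_{n-1}$ inherits both the property of lying inside $A$ and the impossibility of being finitely covered by $\mathcal{F}$.
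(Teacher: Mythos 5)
Your proposal is correct. The paper does not prove Proposition~\ref{p2.9} at all --- it is quoted as a well-known fact with a citation to \cite[Theorem~12.1.3]{Sea2007} --- so there is no in-paper argument to compare against; your cyclic chain \ref{p2.9:s1}~$\Rightarrow$~\ref{p2.9:s2}~$\Rightarrow$~\ref{p2.9:s3}~$\Rightarrow$~\ref{p2.9:s1} is the standard textbook route, and all three steps (the cluster-point/finite-subcover contradiction, the $\varepsilon/2$ promotion of a convergent subsequence of a Cauchy sequence plus the $r$-separated sequence, and the nested non-finitely-coverable pieces of shrinking diameter) go through with the paper's ball-cover formulation of compactness in Definition~\ref{d2.3}. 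The only bookkeeping worth making explicit in the last step is that each chosen piece $A_n$ is nonempty, which is immediate since a set with no finite subcover from $\mathcal{F}$ cannot be empty.
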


Let \((X, \tau)\) be a topological space and let \(S\) be a subset of \(X\). If \(S \cap A \neq \varnothing\) holds for every nonempty \(A \in \tau\), then \(S\) is called a \emph{dense} subset of \(X\). For a metric space \((X, d)\) a set \(A \subseteq X\) is dense if and only if for every \(a \in X\) there is a sequence \((a_n)_{n\in \mathbb{N}} \subseteq A\) such that
\[
a = \lim_{n \to \infty} a_n.
\]
As usual we will say that a topological (or metric) space \(X\) is \emph{separable} iff \(X\) contains an at most countable dense subset. In particular, every finite topological space is separable.

Recall that a point \(p\) of a metric space \((X, d)\) is \emph{isolated} if there is \(\varepsilon > 0\) such that \(d(p, x) > \varepsilon\) for every \(x \in X \setminus \{p\}\). If \(p\) is not an isolated point of \(X\), then \(p\) is called an \emph{accumulation point} of \(X\). We say that \((X, d)\) is \emph{discrete} if all points of \(X\) are isolated.

It can be proved that for a given metric space \((X, d)\) there exists exactly one (up to isometry) complete metric space \((\widetilde{X}, \widetilde{d})\) such that \(\widetilde{X}\) contains a dense, isometric to \((X, d)\), subset (see, for example, Theorem~4.3.19 \cite{Eng1989}). The metric space \((\widetilde{X}, \widetilde{d})\) satisfying the above condition is called the \emph{completion of} \((X, d)\).

\begin{proposition}\label{p2.13}
The completion \((\widetilde{X}, \widetilde{d})\) of a metric space \((X, d)\) is compact if and only if \((X, d)\) is totally bounded.
\end{proposition}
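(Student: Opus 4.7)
The plan is to reduce the proposition to a statement about total boundedness by invoking Proposition~\ref{p2.9}. Since the completion $(\widetilde{X}, \widetilde{d})$ is, by its very construction, a complete metric space, Proposition~\ref{p2.9} tells us that $(\widetilde{X}, \widetilde{d})$ is compact if and only if it is totally bounded. Thus the whole proposition collapses to a single equivalence: $(X, d)$ is totally bounded if and only if $(\widetilde{X}, \widetilde{d})$ is totally bounded. I will identify $X$ with its canonical isometric image inside $\widetilde{X}$, so that the phrase ``dense subset'' is literal and $\widetilde{d}\!\restriction_{X \times X} = d$.

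For the direction ``$\widetilde{X}$ totally bounded $\Rightarrow$ $X$ totally bounded'', I would use the general fact that total boundedness is inherited by subsets. Given $r > 0$, cover $\widetilde{X}$ by finitely many balls $B_{r/2}(y_1), \ldots, B_{r/2}(y_n)$ in $\widetilde{X}$; for each index $i$ with $B_{r/2}(y_i) \cap X \neq \varnothing$ pick some $x_i \in B_{r/2}(y_i) \cap X$, and then by the triangle inequality $B_{r/2}(y_i) \cap X \subseteq B_r(x_i)$. These $B_r(x_i)$ are balls in $(X, d)$ and cover $X$, so Definition~\ref{d2.10} is satisfied.

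The other direction is the slightly more interesting one. Suppose $(X, d)$ is totally bounded and fix $r > 0$. Choose $x_1, \ldots, x_n \in X$ with $X \subseteq \bigcup_{i=1}^{n} B_{r/2}(x_i)$, where the balls are taken in $(X, d)$. I claim that the balls $B_r(x_i)$, now taken inside $(\widetilde{X}, \widetilde{d})$, cover $\widetilde{X}$. Indeed, given $y \in \widetilde{X}$, density of $X$ in $\widetilde{X}$ yields a sequence $(a_k)_{k \in \mathbb{N}} \subseteq X$ with $\widetilde{d}(a_k, y) \to 0$. Each $a_k$ lies in some $B_{r/2}(x_{i(k)})$ with $i(k) \in \{1, \ldots, n\}$; by pigeonhole, some index $i$ occurs for infinitely many $k$, and for any such $k$ with $\widetilde{d}(a_k, y) < r/2$ the strong-form-irrelevant triangle inequality gives $\widetilde{d}(y, x_i) \leqslant \widetilde{d}(y, a_k) + d(a_k, x_i) < r$. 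Hence $y \in B_r(x_i)$, as required.

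There is no real obstacle here; the only step that requires a small amount of care is the density argument in the second direction, specifically the transfer of a cover of $X$ by $d$-balls of radius $r/2$ to a cover of $\widetilde{X}$ by $\widetilde{d}$-balls of radius $r$ with the same centers. Once that transfer is justified by the pigeonhole-plus-triangle-inequality reasoning above, Proposition~\ref{p2.9} finishes the proof immediately.
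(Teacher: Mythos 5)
Your proof is correct. Note that the paper itself does not prove Proposition~\ref{p2.13} at all --- it simply cites Corollary~4.3.30 of \cite{Eng1989} --- so there is no internal argument to compare against; what you have written is a sound, self-contained replacement. The reduction via Proposition~\ref{p2.9} (completion is complete, hence compact iff totally bounded) is exactly the right move, and both transfer directions are handled correctly: subsets of totally bounded sets are totally bounded via the $r/2$-to-$r$ recentering, and a dense totally bounded subset forces total boundedness of the whole space via the triangle inequality. One small simplification: in the second direction the pigeonhole step is unnecessary. Once you have any single $k$ with $\widetilde{d}(a_k, y) < r/2$, that $a_k$ lies in some $B_{r/2}(x_{i(k)})$, and $\widetilde{d}(y, x_{i(k)}) \leqslant \widetilde{d}(y, a_k) + d(a_k, x_{i(k)}) < r$ already places $y$ in $B_r(x_{i(k)})$; you do not need an index that recurs infinitely often. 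Also, since Definition~\ref{d2.10} phrases total boundedness in terms of covers by balls centered at arbitrary points of the ambient space, the first direction could be shortened even further (the balls $B_r(y_i)$ with the original centers already witness total boundedness of $X \subseteq \widetilde{X}$ restricted to $X$), though your recentering onto points of $X$ is harmless and arguably cleaner.
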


For the proof see, for example, Corollary~4.3.30 in \cite{Eng1989}.

\begin{lemma}\label{l2.20}
The completion \((\widetilde{X}, \widetilde{d})\) is ultrametric for every ultrametric \((X, d)\).
\end{lemma}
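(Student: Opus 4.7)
The plan is to exploit the defining property of the completion, namely that $\widetilde{X}$ contains a dense isometric copy of $X$ and $\widetilde{d}$ extends $d$, together with continuity of the metric. Since Proposition following Definition~\ref{d2.6} and the general construction of the completion already guarantee that $\widetilde{d}$ is a metric, the only thing left to verify is the strong triangle inequality \eqref{d1.1:e3} for $\widetilde{d}$ on $\widetilde{X}$.

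I would fix arbitrary points $\widetilde{x}, \widetilde{y}, \widetilde{z} \in \widetilde{X}$ and, using the density of (the isometric image of) $X$ in $\widetilde{X}$, choose sequences $(x_n)_{n\in\mathbb{N}}$, $(y_n)_{n\in\mathbb{N}}$, $(z_n)_{n\in\mathbb{N}}\subseteq X$ with $\lim_n x_n = \widetilde{x}$, $\lim_n y_n = \widetilde{y}$, $\lim_n z_n = \widetilde{z}$ in $(\widetilde{X},\widetilde{d})$. Since $(X,d)$ is ultrametric, for every $n\in\mathbb{N}$ the strong triangle inequality gives
\[
\widetilde{d}(x_n, y_n) = d(x_n, y_n) \leq \max\{d(x_n, z_n),\, d(z_n, y_n)\} = \max\{\widetilde{d}(x_n, z_n),\, \widetilde{d}(z_n, y_n)\},
\]
where the outer equalities use that $\widetilde{d}$ extends $d$ on the embedded copy of $X$.

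The main (minor) step is then to pass to the limit $n\to\infty$. The metric $\widetilde{d}\colon \widetilde{X}\times\widetilde{X}\to\RR^{+}$ is continuous (a standard two-line argument from the ordinary triangle inequality: $|\widetilde{d}(a_n,b_n)-\widetilde{d}(a,b)|\leq \widetilde{d}(a_n,a)+\widetilde{d}(b_n,b)$), so $\widetilde{d}(x_n,y_n) \to \widetilde{d}(\widetilde{x},\widetilde{y})$ and analogously for the other two pairs. Because the function $(s,t)\mapsto \max\{s,t\}$ is continuous on $\RR^{+}\times\RR^{+}$, taking limits on both sides of the displayed inequality yields
\[
\widetilde{d}(\widetilde{x},\widetilde{y}) \leq \max\{\widetilde{d}(\widetilde{x},\widetilde{z}),\, \widetilde{d}(\widetilde{z},\widetilde{y})\},
\]
which is exactly the strong triangle inequality on $\widetilde{X}$.

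I do not anticipate a real obstacle here; the only thing to be careful about is invoking the correct characterisation of the completion (dense isometric embedding plus extension of the metric), after which the argument is just continuity plus closure of the strong triangle inequality under pointwise limits. If one preferred to avoid appealing to the abstract completion and instead build $(\widetilde{X},\widetilde{d})$ from equivalence classes of Cauchy sequences, the same computation would apply directly to representative sequences, so the conclusion is independent of the particular construction chosen.
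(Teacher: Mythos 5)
Your proposal is correct and follows essentially the same route as the paper's own proof: identify $X$ with its dense isometric copy in $\widetilde{X}$, approximate the three points by sequences from $X$, apply the strong triangle inequality termwise, and pass to the limit using the continuity of $\widetilde{d}$ and of $\max$. No gaps.
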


\begin{proof}
Since \(\widetilde{d}\) is a metric on \(\widetilde{X}\), to prove that \((\widetilde{X}, \widetilde{d})\) is ultrametric it suffices to justify
\begin{equation}\label{l2.20:e2}
\widetilde{d} (x, y) \leqslant \max\{\widetilde{d} (x, z), \widetilde{d} (z, y)\}
\end{equation}
for all \(x\), \(y\), \(z \in \widetilde{X}\). By definition, the metric space \((\widetilde{X}, \widetilde{d})\) contains a dense subspace \((X', \widetilde{d}|_{X' \times X'})\) which is isometric to \((X, d)\). For simplicity we may assume \(X = X'\) and \(\widetilde{d}|_{X' \times X'} = d\).

Let \((x_n)_{n \in \mathbb{N}}\), \((y_n)_{n \in \mathbb{N}}\) and \((z_n)_{n \in \mathbb{N}}\) be sequences of points of \(X\) such that 
\begin{equation}\label{l2.20:e3}
\lim_{n \to \infty} x_n = x, \quad \lim_{n \to \infty} y_n = y \quad \text{and} \quad \lim_{n \to \infty} z_n = z.
\end{equation}
Since \(\widetilde{d} \colon \widetilde{X} \times \widetilde{X} \to \RR^{+}\) is a continuous mapping and \(d \colon X \times X \to \RR^{+}\) is an ultrametric, \eqref{l2.20:e3} implies
\begin{align*}
\widetilde{d} (x, y) = \lim_{n \to \infty} d(x_n, y_n) &\leqslant \max\left\{\lim_{n \to \infty} d(x_n, z_n), \lim_{n \to \infty} d(z_n, x_n)\right\} \\
&= \max\{\widetilde{d} (x, z), \widetilde{d} (z, y)\}.
\end{align*}
Inequality~\eqref{l2.20:e2} follows.
\end{proof}

The following lemma will be useful for description of distance sets of totally bounded ultrametric spaces.

\begin{lemma}\label{l6.4}
For every compact metric space \((X, d)\) the distance set \(D(X)\) is a compact subset of \((\RR^{+}, \rho)\) with respect to the standard Euclidean metric \(\rho\).
\end{lemma}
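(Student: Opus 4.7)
The plan is to observe that $D(X)$ is the image of the product $X \times X$ under the metric $d$ viewed as a real-valued function, and then exploit the fact that continuous images of compact sets are compact. Concretely, I would argue along the following lines.

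First, I would note that $D(X) = d(X \times X)$ by the very definition of the distance set, and that $D(X) \subseteq \mathbb{R}^{+}$ because $d$ takes values in $\mathbb{R}^{+}$. Then I would recall (or briefly justify, via sequential compactness in the metric setting handled by Proposition~\ref{p2.9}) that if $(X, d)$ is compact then the product $(X \times X, d_{\max})$ equipped, say, with $d_{\max}((x, y), (x', y')) := \max\{d(x, x'), d(y, y')\}$ is also compact: any sequence $((x_n, y_n))_{n \in \mathbb{N}}$ admits a subsequence $(x_{n_k})$ converging to some $x \in X$, and then a further subsequence of $(y_{n_k})$ converging to some $y \in X$, so $(x_{n_k'}, y_{n_k'}) \to (x, y)$ in the product.

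Next I would verify that $d \colon X \times X \to \mathbb{R}^{+}$ is continuous with respect to $d_{\max}$ and the Euclidean metric $\rho$; this follows from the triangle inequality in the usual way, giving $|d(x, y) - d(x', y')| \leqslant d(x, x') + d(y, y') \leqslant 2 d_{\max}((x, y), (x', y'))$. Combining these two facts, $D(X) = d(X \times X)$ is the image of a compact space under a continuous map, hence compact in $(\mathbb{R}^{+}, \rho)$.

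Alternatively, and perhaps cleaner to present directly, I would proceed by sequential compactness at the level of $D(X)$: given a sequence $(r_n)_{n \in \mathbb{N}} \subseteq D(X)$, write $r_n = d(x_n, y_n)$ with $x_n, y_n \in X$, extract a subsequence along which both $x_n \to x$ and $y_n \to y$ using Proposition~\ref{p2.9}, and conclude via continuity of $d$ that the corresponding subsequence of $(r_n)$ converges to $d(x, y) \in D(X)$. There is no real obstacle here; the only small care needed is to handle the double extraction of subsequences (first in the $x$-coordinate, then in the $y$-coordinate) and to be explicit that continuity of the metric is what turns convergence of the pairs into convergence of the distances.
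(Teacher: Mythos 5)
Your proposal is correct and follows essentially the same route as the paper: $D(X)$ is the image of the compact product $X \times X$ under the continuous map $d$, hence compact. The only difference is that you justify the compactness of the product and the continuity of $d$ by elementary sequential arguments in the metric setting, whereas the paper cites the Tychonoff theorem and general facts about continuous images of compact Hausdorff spaces; both are fine.
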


\begin{proof}
Let \((X, d)\) be compact. The metric \(d \colon X \times X \to \RR^{+}\) is a continuous function on the Cartesian product \(X \times X\). Any Cartesian product of compact spaces is compact by Tychonoff theorem (\cite[Theorem~3.2.4]{Eng1989}). Furthermore, every continuous image of compact Hausdorff space is compact (\cite[Theorem~3.1.10]{Eng1989}). To complete the proof it suffices to note that \((X, d)\) evidently is a Hausdorff space and that Cartesian products of Hausdorff spaces are Hausdorff (\cite[Theorem~2.3.11]{Eng1989}).
\end{proof}

The next result can be found in~\cite{Comicheo2018} (see formula (12), Theorem~1.6).

\begin{proposition}\label{c2.15}
Let \((X, d)\) be a nonempty ultrametric space and let \(A\) be a dense subset of \(X\). Then the distance sets of \((X, d)\) and of \((A, d|_{A \times A})\) are the same.
\end{proposition}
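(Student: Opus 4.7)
The containment $D(A) \subseteq D(X)$ is immediate since $A \subseteq X$, so the whole content is the reverse inclusion $D(X) \subseteq D(A)$. My plan is to fix an arbitrary value $r = d(x,y) \in D(X)$ and exhibit $a,b \in A$ with $d(a,b) = r$, separating the cases $r = 0$ and $r > 0$.

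For $r = 0$, since $(X,d)$ is nonempty and $A$ is dense, $A$ is itself nonempty, so any $a \in A$ gives $d(a,a) = 0 \in D(A)$.

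For $r > 0$, the key tool is the standard isosceles principle in ultrametric spaces: if $d(u,v) < d(v,w)$ then the strong triangle inequality, applied twice, forces $d(u,w) = d(v,w)$. Density of $A$ in $(X,d)$ implies $A \cap B_r(x) \neq \varnothing$ and $A \cap B_r(y) \neq \varnothing$, so I can pick $a \in A$ with $d(a,x) < r$ and $b \in A$ with $d(b,y) < r$. Applying the isosceles principle to the triple $(a,x,y)$ gives $d(a,y) = d(x,y) = r$, and then applying it to the triple $(a,y,b)$ (using $d(b,y) < r = d(a,y)$) gives $d(a,b) = d(a,y) = r$, so $r \in D(A)$.

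I do not anticipate any genuine obstacle here; the only point that needs care is making the isosceles step explicit, since the argument silently uses \eqref{d1.1:e3} in both directions to upgrade the two strict inequalities $d(a,x) < r$ and $d(b,y) < r$ into the desired equality $d(a,b) = r$. No completeness, compactness, or countability hypothesis is needed — only the strong triangle inequality and topological density — which is consistent with the statement being used as a preparatory lemma.
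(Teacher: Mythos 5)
Your proof is correct: the isosceles principle is applied properly in both steps, and the two strict inequalities \(d(a,x) < r\) and \(d(b,y) < r\) do force \(d(a,b) = d(x,y) = r\) exactly as you argue. Note that the paper itself gives no proof of this proposition, citing it instead to Comicheo and Shamseddine; your argument is the standard one and fills that gap with precisely the right tool (the same isosceles phenomenon the paper invokes elsewhere via Lemma~\ref{l3.10}), using nothing beyond density and the strong triangle inequality.
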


\begin{corollary}\label{c2.10}
Let \((X, \tau)\) be an ultrametrizable separable topological space. Then the distance set \(D(X, d)\) is at most countable for every compatible with~\(\tau\) ultrametric~\(d\).
\end{corollary}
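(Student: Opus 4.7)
The plan is to reduce the claim to Proposition \ref{c2.15} by exploiting the equivalence between topological density and metric density for compatible ultrametrics.

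First, I would choose an at most countable dense subset $A$ of the separable space $(X, \tau)$, which exists by the definition of separability. The key observation is that since $d$ is compatible with $\tau$ (so that $\bB_{X,d}$ is an open base for $\tau$), a set is dense in $(X, \tau)$ if and only if it is dense in the metric space $(X, d)$: indeed, denseness means nonempty intersection with every nonempty open set, and the open sets of $\tau$ are precisely the unions of open balls in $(X, d)$. Thus $A$ is dense in $(X, d)$ in the metric sense.

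Next, I would apply Proposition \ref{c2.15} to the ultrametric space $(X, d)$ and the dense subset $A$, concluding that
\[
D(X, d) = D(A, d|_{A \times A}).
\]
Since $A$ is at most countable, the Cartesian product $A \times A$ is at most countable, so its image under $d|_{A \times A}$ is at most countable. Therefore $D(X, d)$ is at most countable, as required.

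There is essentially no obstacle: the corollary is a straightforward combination of the separability hypothesis, the compatibility of $d$ with $\tau$, and Proposition \ref{c2.15}. The only subtle point worth spelling out in the final write-up is the equivalence of topological and metric density, which is immediate from Definition \ref{d1.3}.
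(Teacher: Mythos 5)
Your proof is correct and follows exactly the route the paper intends: the corollary is stated as an immediate consequence of Proposition~\ref{c2.15} applied to an at most countable dense subset, and your observation that topological density coincides with metric density for a compatible ultrametric is the (implicit) bridging step. Nothing is missing.
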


\begin{remark}\label{r2.11}
In Theorem~\ref{t4.8} of Section~\ref{sec4} of the paper we will prove that the converse to Corollary~\ref{c2.10} is also true.
\end{remark}

Proposition~\ref{c2.15} and Lemma~\ref{l2.20} also imply the following well-known result: ``No new values of the ultrametric after completion'' (see, for example, \cite[p.~4]{PerezGarcia2010} or \cite[Theorem~1.6, Statement~(12)]{Comicheo2018}).

\begin{proposition}\label{p4.4}
Let \((X, d)\) be an ultrametric space and let \((\widetilde{X}, \widetilde{d})\) be the completion of \((X, d)\). Then the equality \(D(X) = D(\widetilde{X})\) holds.
\end{proposition}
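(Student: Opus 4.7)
The plan is to obtain Proposition~\ref{p4.4} as a direct consequence of Lemma~\ref{l2.20} and Proposition~\ref{c2.15}, so essentially no new work is required.

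First I would dispose of the degenerate case: if $X = \varnothing$, then its completion is also empty and both distance sets are empty, so the equality $D(X) = D(\widetilde{X})$ holds trivially. Assume therefore that $X$ is nonempty.

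Next, by Lemma~\ref{l2.20}, the completion $(\widetilde{X}, \widetilde{d})$ is itself an ultrametric space. By the very definition of the completion, $\widetilde{X}$ contains an isometric copy of $(X, d)$ as a dense subset; identifying $X$ with that copy (as in the proof of Lemma~\ref{l2.20}) we may assume $X \subseteq \widetilde{X}$, $\widetilde{d}|_{X \times X} = d$, and $X$ is dense in $\widetilde{X}$.

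Finally, I would apply Proposition~\ref{c2.15} to the nonempty ultrametric space $(\widetilde{X}, \widetilde{d})$ and its dense subset $X$. This yields
\[
D(\widetilde{X}, \widetilde{d}) = D(X, \widetilde{d}|_{X \times X}) = D(X, d),
\]
which is precisely the asserted equality.

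There is no real obstacle here: the only subtle point is noticing that the dense subset in Proposition~\ref{c2.15} is required to sit inside an ambient ultrametric space, which is exactly what Lemma~\ref{l2.20} supplies for the completion. Everything else is a matter of correctly identifying $X$ with its isometric image in $\widetilde{X}$.
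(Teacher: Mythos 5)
Your argument is exactly the one the paper intends: the text introducing Proposition~\ref{p4.4} states that it follows from Proposition~\ref{c2.15} and Lemma~\ref{l2.20}, which is precisely the combination you use (with the harmless extra care of treating the empty case and the identification of \(X\) with its dense isometric copy). The proposal is correct and matches the paper's approach.
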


\subsection{Totally ordered sets}
\label{sec2.2}

Recall that a reflexive and transitive binary relation \(\preccurlyeq\) on a set \(Y\) is a \emph{partial order} on \(Y\) if \(\preccurlyeq\) has the \emph{antisymmetric property}:
\[
\bigl(x \preccurlyeq y \text{ and } y \preccurlyeq x\bigr) \Rightarrow (x = y)
\]
is valid for all \(x\), \(y \in Y\). 

\begin{remark}\label{r3.1}
Here and in what follows we use \(x \preccurlyeq y\) instead of \(\<x, y> \in {\preccurlyeq}\).
\end{remark}

Let \(\preccurlyeq_Y\) be a partial order on a set \(Y\). Then a pair \((Y, {\preccurlyeq}_Y)\) is called to be a \emph{poset} (a partially ordered set). If \(Z\) is a subset of \(Y\) and \({\preccurlyeq}_Z\) is a partial order on \(Z\) given by \({\preccurlyeq}_Z = Z^{2} \cap {\preccurlyeq}_Y\), then we say that \(Z = (Z, {\preccurlyeq}_Z)\) is a subposet of \((Y, {\preccurlyeq}_Y)\). A poset \((Y, {\preccurlyeq}_Y)\) is \emph{totally ordered} if, for all points \(y_1\), \(y_2 \in Y\), we have 
\[
y_1 \preccurlyeq_Y y_2 \quad \text{or} \quad y_2 \preccurlyeq_Y y_1.
\]

\begin{example}\label{ex3.2}
Let \(\mathbb{Z}^{-}\) be the set of all negative integers,
\[
\mathbb{Z}^{-} = \{\ldots, -3, -2, -1\}
\]
and let \(\widehat{\mathbb{Z}}^{-} := \mathbb{Z}^{-} \cup \{-\infty\}\). Let us define an order \(\preccurlyeq\) on \(\widehat{\mathbb{Z}}^{-}\) by:
\[
-\infty \preccurlyeq p \quad \text{and} \quad \bigl((q \preccurlyeq l) \Leftrightarrow (q \leqslant l)\bigr)
\]
for all \(p\), \(q\), \(l \in \mathbb{Z}^{-}\), where \(\leqslant\) is the standard order on \(\mathbb{R}\). Then \((\widehat{\mathbb{Z}}^{-}, {\preccurlyeq})\) is totally ordered.
\end{example}

\begin{definition}\label{d2.8}
Let \((Q, {\preccurlyeq}_Q)\) and \((L, {\preccurlyeq}_L)\) be posets. A mapping \(f \colon Q \to L\) is \emph{isotone} if, for all \(q_1\), \(q_2 \in Q\), we have
\[
(q_1 \preccurlyeq_Q q_2) \Rightarrow (f(q_1) \preccurlyeq_L f(q_2)).
\]
If, in addition, the isotone mapping \(f \colon Q \to L\) is bijective and the inverse mapping \(f^{-1} \colon L \to Q\) is also isotone, then we say that \(f\) is an \emph{order isomorphism} and that \((Q, {\preccurlyeq}_Q)\) and \((L, {\preccurlyeq}_L)\) have the same \emph{order type}.
\end{definition}

\begin{definition}\label{d6.3}
A poset has the order type \(\mathbf{1} + \bm{\omega}^{*}\) if this poset is order isomorphic to \((\widehat{\mathbb{Z}}^{-}, {\preccurlyeq})\).
\end{definition}

\begin{example}\label{ex3.5}
The subposets 
\[
\{0\} \cup \left\{\frac{1}{n} \colon n \in \mathbb{N}\right\} \quad \text{and} \quad \{0\} \cup \left\{1+\frac{1}{n} \colon n \in \mathbb{N}\right\}
\]
of the poset \((\mathbb{R}, {\leqslant})\) have the order type \(\mathbf{1} + \bm{\omega}^{*}\).
\end{example}

\begin{remark}\label{r6.2}
The symbol \(\bm{\omega}\) is the usual designation of the order type of \((\mathbb{N}, {\leqslant})\) and \(\mathbf{1}\) denotes the order type of one-point posets. The symbol \(\bm{\omega}^{*}\) is used for the backwards order type of \(\bm{\omega}\). Thus, the subposet \(\mathbb{Z}^{-}\) of the poset \((\mathbb{R}, {\leqslant})\) has the order type \(\bm{\omega}^{*}\). Moreover, by \(\mathbf{1} + \bm{\omega}^{*}\) we denote the sum of the order types \(\mathbf{1}\) and \(\bm{\omega}^{*}\) (see, for example, Section~1.4 in \cite{Ros1982} for details).
\end{remark}

We also remember that a point \(q_0 \in Q\) is the called the smallest element of a poset \((Q, {\preccurlyeq})\) if the inequality \(q_0 \preccurlyeq q\) holds for every \(q \in Q\). The largest element of the poset can be defined by duality. In particular, every totally ordered set having the order type \(\mathbf{1} + \bm{\omega}^{*}\) contains the largest element and the smallest one.

\begin{example}\label{ex2.20}
The point \(0\) is the smallest element of the distance set \(D(X, d)\) of every nonempty ultrametric space \((X, d)\). The distance set \(D(X, d)\) contains the largest element iff \(\diam X \in D(X, d)\), where \(\diam (X, d) = \sup\{d(x, y) \colon x, y \in X\}\).
\end{example}

\section{Distance sets of totally bounded ultrametric spaces}
\label{sec3}

The following general result was obtained by Delhomm\'{e}, Laflamme, Pouzet, and Sauer in paper~\cite{DLPS2008TaiA}.

\begin{theorem}\label{t5.6}
Let \(A\) be a subset of \(\RR^{+}\). Then \(A\) is the distance set of a nonempty ultrametric space if and only if \(0\) belongs to \(A\).
\end{theorem}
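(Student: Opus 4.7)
The proof splits cleanly into a trivial necessity direction and a short explicit construction for sufficiency.

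For necessity, observe that if \((X,d)\) is any nonempty ultrametric space and \(x \in X\), then \(d(x,x) = 0\), so \(0 \in D(X,d) = A\).

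For sufficiency, my plan is to exhibit, for every \(A \subseteq \RR^{+}\) containing \(0\), a concrete ultrametric space whose distance set is exactly \(A\). I would treat the degenerate case \(A = \{0\}\) separately by taking \(X\) to be a one-point set, where trivially \(D(X) = \{0\}\). In the remaining case I set \(B = A \setminus \{0\}\neq \varnothing\), pick any symbol \(* \notin B\), let \(X = B \cup \{*\}\), and define \(d \colon X \times X \to \RR^{+}\) by
\[
d(x,x) = 0, \qquad d(*, b) = d(b, *) = b \ \text{for } b \in B, \qquad d(b_1, b_2) = \max\{b_1, b_2\} \ \text{for distinct } b_1, b_2 \in B.
\]

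Then I would verify the three conditions of Definition~\ref{d1.1} together with the strong triangle inequality~\eqref{d1.1:e3}. Positivity off the diagonal and symmetry are built in. The strong triangle inequality reduces, by symmetry, to two subcases: for three distinct points \(b_1, b_2, b_3 \in B\) one checks \(\max\{b_1, b_2\} \leq \max\{b_1, b_2, b_3\}\); for the remaining configurations involving \(*\) the right-hand side of~\eqref{d1.1:e3} always equals \(\max\{b_1, b_2\}\) or dominates the value on the left by the same monotonicity. Finally, \(D(X, d) = A\) because \(0 = d(*, *)\) lies in \(D(X, d)\), every \(b \in B\) is realized as \(d(*, b)\), and no other values occur since \(\max\{b_1, b_2\} \in B\) whenever \(b_1, b_2 \in B\).

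There is no substantial obstacle here: the star-shaped construction based at \(*\) is tailored so that every element of \(B\) appears as a distance to the center, while the ultrametric extension to pairs of non-center points is forced to lie in \(B\) by taking the maximum. The only mild subtlety is remembering to handle \(A = \{0\}\) separately, since the construction above requires \(B\) to be nonempty in order to introduce a second point distinct from \(*\).
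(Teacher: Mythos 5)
Your proof is correct and is essentially the paper's own argument: the paper (Example~\ref{ex3.11}, following \cite{DLPS2008TaiA}) takes \(X = A\) with \(d(x,y)=\max\{x,y\}\) for \(x\neq y\), in which the element \(0\in A\) plays exactly the role of your added point \(*\), so your star-shaped space is isometric to the paper's construction. The only cosmetic difference is that you split off the case \(A=\{0\}\), which the paper's version handles uniformly.
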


The proof of this theorem is based on the following simple construction.

\begin{example}\label{ex3.11}
Let \(X\) be a subset of \(\RR^{+}\) such that \(0 \in X\). Following \cite{DLPS2008TaiA} we define a mapping \(d \colon X \times X \to \RR^{+}\) by
\begin{equation}\label{ex3.11:e1}
d(x, y) = \begin{cases}
0 & \text{if } x = y,\\
\max \{x, y\} & \text{if } x \neq y.
\end{cases}
\end{equation}
Then \(d\) is an ultrametric on \(X\) and the distance set of \((X, d)\) coincides with \(X\), \(D(X, d) = X\).
\end{example}

In Theorem~\ref{t5.10} below we will refine Theorem~\ref{t5.6} for the case of totally bounded ultrametric spaces.

\begin{lemma}\label{l6.3}
Let \((X, d)\) be a totally bounded infinite ultrametric space. Then \(D(X)\) is countably infinite, \(|D(X)| = \aleph_0\).
\end{lemma}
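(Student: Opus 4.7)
The statement splits into two independent claims: that $D(X)$ is at most countable and that $D(X)$ is infinite. Both can be proved directly from total boundedness together with the strong triangle inequality; one could alternatively deduce the upper bound via the completion (Proposition~\ref{p4.4} together with Lemma~\ref{l6.4}), but an ultrametric-native argument is shorter and avoids passing through the completion.

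For the upper bound, the plan is to show that for every $r > 0$ the truncation $D(X) \cap [r, \infty)$ is finite; countability then follows by writing $D(X) = \{0\} \cup \bigcup_{k \in \NN} \bigl(D(X) \cap [1/k, \infty)\bigr)$ and noting that a countable union of finite sets is countable. Given $r > 0$, Definition~\ref{d2.10} produces a finite cover $\{B_r(x_1), \ldots, B_r(x_n)\}$ of $X$. In an ultrametric space any two points of a common ball $B_r(x_i)$ lie at distance $< r$, so a distance realized in $[r, \infty)$ must come from a pair $x \in B_r(x_i)$, $y \in B_r(x_j)$ with the two balls distinct, and hence disjoint — in which case $d(x_i, x_j) \geqslant r$. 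A twofold application of the ``longest-side-wins'' form of the strong triangle inequality — first $d(x, x_j) = d(x_i, x_j)$ from $d(x, x_i) < r \leqslant d(x_i, x_j)$, then $d(x, y) = d(x, x_j)$ from $d(y, x_j) < r \leqslant d(x, x_j)$ — yields $d(x, y) = d(x_i, x_j)$. Thus $D(X) \cap [r, \infty) \subseteq \{d(x_i, x_j) : 1 \leqslant i < j \leqslant n\}$, which is finite.

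For the lower bound — that $D(X)$ is infinite — I argue by contradiction. Suppose $D(X)$ were finite. Then $D(X) \setminus \{0\}$ would be nonempty (else $X$ would be a single point) and would contain a smallest positive value $r_1$, so $B_{r_1}(x) = \{x\}$ for every $x \in X$. Total boundedness applied at scale $r_1$ would then cover $X$ by finitely many singletons, forcing $X$ to be finite and contradicting the hypothesis. Combined with the upper bound this gives $|D(X)| = \aleph_0$.

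The only step that is not completely routine is the distance-equality claim in the upper-bound argument: points in distinct balls of a fixed radius of an ultrametric cover pairwise realize the same distance as their respective centers. I expect this to be the main technical point to spell out, but it is a direct application of the isosceles-triangle property recorded right after Definition~\ref{d1.1}.
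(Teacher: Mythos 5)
Your proof is correct, and the countability half takes a genuinely different route from the paper's. The paper gets ``at most countable'' indirectly: every totally bounded metric space is separable, and a separable ultrametric space has an at most countable distance set (Corollary~\ref{c2.10}, which rests on Proposition~\ref{c2.15} about dense subsets). You instead prove the stronger, quantitative fact that \(D(X) \cap [r, \infty)\) is finite for every \(r > 0\), by taking a finite \(r\)-cover and showing via the isosceles property (this is exactly Lemma~\ref{l3.10} applied to \(\{x_i, x_j, x, y\}\)) that any distance \(\geqslant r\) must equal one of the finitely many center-to-center distances \(d(x_i, x_j)\). That argument is sound: two points in a common ball of radius \(r\) are at distance \(< r\) by the strong triangle inequality, distinct same-radius balls are disjoint (Corollary~\ref{c4.4}), and the two applications of the ``longest-side-wins'' rule are valid. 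What your approach buys is notable: the finiteness of \(D(X) \cap [r, \infty)\) is precisely condition \ref{t5.10:s2} of Theorem~\ref{t5.10} (the order type \(\mathbf{1} + \bm{\omega}^{*}\)), which the paper later derives by passing to the compact completion and invoking Lemmas~\ref{l6.4} and \ref{l6.5} together with Heine--Borel; your elementary argument would shortcut that part of Theorem~\ref{t5.10} as well, at the cost of being slightly longer than the one-line citation the paper uses for this lemma alone. The infiniteness half --- a smallest positive distance would make all radius-\(r_1\) balls singletons and force \(X\) finite --- is essentially identical to the paper's argument.
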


\begin{proof}
Every totally bounded metric space is separable. Consequently, \(D(X)\) is at most countable by Corollary~\ref{c2.10}.

Suppose \(D(X)\) is finite, \(|D(X)| < \aleph_0\). Since \(X\) is infinite, the inequality \(|D(X)| \geqslant 2\) holds. The double inequality \(2 \leqslant |D(X)| < \aleph_0\) implies that there is \(d_1 \in D(X)\) such that
\begin{equation}\label{l6.3:e1}
0 < d_1 \leqslant d^*
\end{equation}
for every \(d^* \in D(X) \setminus \{0\}\). Let us consider the family \(\mathcal{F} = \left\{B_{d_1/2}(x) \colon x \in X\right\}\). Then \(\mathcal{F}\) is a cover of \(X\), 
\[
X \subseteq \bigcup_{B \in \mathcal{F}} B.
\]
Using~\eqref{l6.3:e1} we obtain \(B_{d_1/2}(x) \cap B_{d_1/2}(y) = \varnothing\) for all distinct \(x\), \(y \in X\). Hence, the cover \(\mathcal{F}\) does not contain any finite subcover. This contradicts Definition~\ref{d2.10}. The equality \(|D(X)| = \aleph_0\) follows.
\end{proof}

The following lemma is a special case of Lemma~1 from \cite{DP2013SM}.

\begin{lemma}\label{l3.10}
Let \((X, \rho)\) be an ultrametric space and let \(\{a, b, c, d\}\) be a four-point subset of \(X\). Then the inequalities \(\rho(a, b) > \rho(a, d)\) and \(\rho(a, b) > \rho(b, c)\) imply the equality \(\rho(a, b) = \rho(c, d)\).
\end{lemma}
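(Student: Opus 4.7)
The plan is to apply the standard ``isosceles with base no greater than legs'' property of ultrametrics three times in succession. Recall that in an ultrametric space, if two sides of a triangle have different lengths, then the third side must equal the larger of the two. This follows directly from the strong triangle inequality \eqref{d1.1:e3}: if $\rho(x,y) < \rho(y,z)$, then $\rho(x,z) \leqslant \max\{\rho(x,y), \rho(y,z)\} = \rho(y,z)$, and also $\rho(y,z) \leqslant \max\{\rho(x,y), \rho(x,z)\}$ forces $\rho(y,z) \leqslant \rho(x,z)$.

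First, I would apply this to the triangle $\{a, b, d\}$. The hypothesis $\rho(a,b) > \rho(a,d)$ gives two sides of unequal length, so the remaining side satisfies $\rho(b,d) = \rho(a,b)$. Next, applying the same observation to the triangle $\{a, b, c\}$ using $\rho(a,b) > \rho(b,c)$ yields $\rho(a,c) = \rho(a,b)$.

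Finally, I would turn to the triangle $\{a, c, d\}$. From the previous step $\rho(a,c) = \rho(a,b)$, while by hypothesis $\rho(a,d) < \rho(a,b)$; hence $\rho(a,c) > \rho(a,d)$. Invoking the isosceles property once more identifies the remaining side: $\rho(c,d) = \rho(a,c) = \rho(a,b)$, as desired.

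No part of this presents a real obstacle; the only point requiring slight care is choosing the right three triangles and verifying that the strict inequalities propagate correctly, in particular that $\rho(a,c) > \rho(a,d)$ is genuinely strict before the final application. Since the four points are distinct, none of the triangles degenerate, so the isosceles principle applies cleanly in each case.
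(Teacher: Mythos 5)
Your proof is correct. The isosceles principle you state is the standard consequence of the strong triangle inequality, and your derivation of it is sound: from $\rho(x,y)<\rho(y,z)$ the two applications of \eqref{d1.1:e3} force $\rho(x,z)=\rho(y,z)$. Applying it to the triangle $\{a,b,c\}$ gives $\rho(a,c)=\rho(a,b)$, and then to $\{a,c,d\}$ (using $\rho(a,c)=\rho(a,b)>\rho(a,d)$) gives $\rho(c,d)=\rho(a,c)=\rho(a,b)$, which is the claim. Note that the paper itself supplies no proof of this lemma --- it only remarks that the statement is a special case of Lemma~1 of \cite{DP2013SM} --- so your argument is a genuine, self-contained addition rather than a variant of something in the text. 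One small stylistic point: your first step, deducing $\rho(b,d)=\rho(a,b)$ from the triangle $\{a,b,d\}$, is never used afterwards; the last two triangles alone suffice, so the proof can be shortened to two applications of the isosceles property. Also, the distinctness of the four points is not actually needed for the argument (degenerate ``triangles'' cause no harm, since the isosceles deduction only requires the strict inequality between two of the distances), though it does no harm to mention it.
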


\begin{lemma}\label{l6.5}
Let \((X, d)\) be a compact ultrametric space with the distance set \(D(X)\) and let \(p > 0\) belong to \(D(X)\). Then \(p\) is an isolated point of the metric space \((D(X), \rho)\) with the standard metric \(\rho(x, y) = |x-y|\).
\end{lemma}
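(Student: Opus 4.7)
The plan is to proceed by contradiction. Assume that some \(p > 0\) in \(D(X)\) fails to be isolated in \((D(X), \rho)\). Since \(D(X)\) is compact by Lemma~\ref{l6.4}, we may select a sequence \((q_n)_{n \in \mathbb{N}} \subseteq D(X) \setminus \{p\}\) with \(q_n \to p\). Writing \(q_n = d(x_n, y_n)\) with \(x_n, y_n \in X\), I would use the sequential compactness of \((X, d)\) (Proposition~\ref{p2.9}, condition~\ref{p2.9:s2}) applied successively to the sequences \((x_n)\) and \((y_n)\) to pass to a subsequence along which \(x_n \to x\) and \(y_n \to y\). Continuity of \(d \colon X \times X \to \RR^{+}\) then forces \(d(x, y) = p\).

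For all sufficiently large \(n\), the convergences \(x_n \to x\) and \(y_n \to y\) give \(d(x_n, x) < p\) and \(d(y_n, y) < p\). The strong triangle inequality applied to the triple \(x_n, x, y\) yields \(d(x_n, y) \leqslant \max\{d(x_n, x), d(x, y)\} = p\), while the same inequality applied to the triple \(x, x_n, y\) yields \(p = d(x, y) \leqslant \max\{d(x, x_n), d(x_n, y)\} = d(x_n, y)\), so \(d(x_n, y) = p\). Repeating this step with the triple \(x_n, y, y_n\) gives \(d(x_n, y_n) = p\), so \(q_n = p\) for all large \(n\), contradicting \(q_n \neq p\).

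The only step requiring care is the double application of the strong triangle inequality that pins down \(d(x_n, y) = p\) and then \(d(x_n, y_n) = p\) from the strict inequalities \(d(x_n, x) < p\) and \(d(y_n, y) < p\); this is the usual isosceles observation that in an ultrametric, distances strictly below the value \(p\) cannot change a distance equal to \(p\). Alternatively, one may invoke Lemma~\ref{l3.10} on the four-point set \(\{x_n, y_n, x, y\}\). No separate treatment of the case \(X\) finite is necessary, since then \(D(X)\) is finite and every point is automatically isolated.
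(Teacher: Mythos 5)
Your proposal is correct and follows essentially the same route as the paper: extract $x_n, y_n$ realizing distances converging to $p$, pass to convergent subsequences by sequential compactness, get $d(x,y)=p$ by continuity, and force $d(x_n,y_n)=p$ via the isosceles property (the paper cites Lemma~\ref{l3.10} for exactly the four-point computation you carry out by hand). The only difference is presentational: the paper treats the one-sided neighbourhoods $(p, p+\varepsilon)$ and $(p-\varepsilon, p)$ separately, while you handle an arbitrary sequence $q_n \to p$ with $q_n \neq p$ in one pass, which is a harmless simplification.
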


\begin{proof}
Suppose for every \(\varepsilon > 0\) there is \(s \in D(X)\) satisfying
\begin{equation}\label{l6.5:e1}
p < s < p+\varepsilon.
\end{equation}
Then there is \((s_n)_{n \in \mathbb{N}} \subseteq D(X)\) such that 
\begin{equation}\label{l6.5:e7}
\lim_{n \to \infty} s_n = p \quad \text{and} \quad p < s_n
\end{equation}
for every \(n \in \mathbb{N}\). We can find two sequences \((x_n)_{n \in \mathbb{N}} \subseteq X\) and \((y_n)_{n \in \mathbb{N}} \subseteq X\) such that
\begin{equation}\label{l6.5:e2}
d(x_n, y_n) = s_n
\end{equation}
for every \(n \in \mathbb{N}\) and 
\begin{equation}\label{l6.5:e3}
\lim_{n \to \infty} d(x_n, y_n) = p.
\end{equation}
By Proposition~\ref{p2.9}, there are points \(x\), \(y \in X\) and subsequences \((x_{n_k})_{k \in \mathbb{N}}\) of \((x_n)_{n \in \mathbb{N}}\) and \((y_{n_k})_{k \in \mathbb{N}}\) of \((y_n)_{n \in \mathbb{N}}\) such that 
\begin{equation}\label{l6.5:e4}
\lim_{k \to \infty} d(x, x_{n_k}) = \lim_{k \to \infty} d(y, y_{n_k}) = 0.
\end{equation}
Since \(d \colon X \times X \to \RR^{+}\) is continuous, from~\eqref{l6.5:e7}, \eqref{l6.5:e2} and \eqref{l6.5:e4} it follows that \(d(x, y) = p\). 

Let us consider the set \(\{x,  y, x_{n_k}, y_{n_k}\} \subseteq X\). Equalities \(d(x, y) = p\), \eqref{l6.5:e4} and Lemma~\ref{l3.10} give us \(p = d(x_{n_k}, y_{n_k})\) for all sufficiently large \(k \in \mathbb{N}\), that contradicts \eqref{l6.5:e7}. Thus, there is \(\varepsilon_1 > 0\) such that
\begin{equation}\label{l6.5:e5}
(p, p+\varepsilon_1) \cap D(X) = \varnothing.
\end{equation}

Arguing similarly we can find \(\varepsilon_2 > 0\) satisfying 
\begin{equation}\label{l6.5:e6}
(p - \varepsilon_2, p) \cap D(X) = \varnothing.
\end{equation}
From~\eqref{l6.5:e5} and \eqref{l6.5:e6} it follows that \(p\) is an isolated point of \(D(X)\).
\end{proof}

\begin{theorem}\label{t5.10}
Let \(A\) be a subset of \(\RR^{+}\). Then \(A\) is the distance set of an infinite totally bounded ultrametric space \((X, d)\) if and only if the following conditions simultaneously hold:
\begin{enumerate}
\item \label{t5.10:s2} \(A\) has the order type \(\mathbf{1} + \bm{\omega}^{*}\) as a subposet of \((\RR^{+}, {\leqslant})\).
\item \label{t5.10:s3} The point \(0\) is an accumulation point of \(A\) with respect to the standard Euclidean metric on \(\RR^{+}\).
\end{enumerate}
\end{theorem}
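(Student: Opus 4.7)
The plan is to prove the two directions separately. For \emph{necessity}, assume $A = D(X, d)$ with $(X, d)$ infinite and totally bounded. Propositions~\ref{p2.13} and~\ref{p4.4} together with Lemma~\ref{l2.20} let me pass to the completion $(\widetilde{X}, \widetilde{d})$, which is an infinite compact ultrametric space with $D(\widetilde{X}) = A$. Then Lemma~\ref{l6.4} gives that $A$ is a compact subset of $(\RR^{+}, \rho)$, Lemma~\ref{l6.5} that every positive point of $A$ is isolated in $(A, \rho)$, and Lemma~\ref{l6.3} that $|A| = \aleph_{0}$. Condition~(ii) follows at once: an infinite compact subset of $\RR^{+}$ must admit an accumulation point, and since every positive point of $A$ is isolated, the only candidate is $0$. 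For~(i), I would recursively define $p_{1} := \max A$ (which exists by compactness) and $p_{n+1} := \max(A \setminus \{p_{1}, \ldots, p_{n}\})$, well defined because removing the finitely many isolated points $p_{1}, \ldots, p_{n}$ from the compact set $A$ leaves a compact set still containing $0$. The strictly decreasing sequence $p_{1} > p_{2} > \cdots$ is monotone inside a compact set whose only accumulation point is $0$, so $p_{n} \to 0$. If $q \in A \setminus \{0\}$ and $N$ is chosen with $p_{N} < q$, then the maximality defining $p_{N}$ forces $q \in \{p_{1}, \ldots, p_{N-1}\}$. Hence $A = \{0\} \cup \{p_{n} : n \in \NN\}$ has order type $\mathbf{1} + \bm{\omega}^{*}$.

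For \emph{sufficiency}, I would apply the construction of Example~\ref{ex3.11} directly to $A$; this is legitimate because an accumulation point of a metric space lies in the space, so~(ii) guarantees $0 \in A$. The resulting ultrametric $d$ on $A$ satisfies $D(A, d) = A$, and~(i) forces $A$ to be infinite. To check total boundedness, enumerate $A \setminus \{0\} = \{p_{1} > p_{2} > \cdots\}$ using~(i), and given $r > 0$, use~(ii) to choose $N$ with $p_{N} < r$. Since $d(a, 0) = a$ for every $a \in A \setminus \{0\}$, the ball $B_{r}(0)$ contains every $a \in A$ with $a < r$; the remaining elements are among $p_{1}, \ldots, p_{N-1}$, each trivially contained in $B_{r}(p_{n})$. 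Thus the finitely many balls $B_{r}(0), B_{r}(p_{1}), \ldots, B_{r}(p_{N-1})$ cover $A$.

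I expect the main subtlety to lie in the enumeration argument of the necessity direction, specifically in verifying that the recursively chosen $p_{n}$ simultaneously decrease to $0$ \emph{and} exhaust $A \setminus \{0\}$; everything else is a direct application of the preparatory lemmas. It is worth emphasising that condition~(ii) is genuinely independent of~(i) in the sufficiency direction: Example~\ref{ex3.5} exhibits a subposet of $(\RR^{+}, \leqslant)$ of order type $\mathbf{1} + \bm{\omega}^{*}$ whose positive tail converges to $1$, and for such an $A$ the Example~\ref{ex3.11} construction produces an infinite ultrametric with $1$-separated positive points, which is not totally bounded.
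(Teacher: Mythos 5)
Your proposal is correct, and its overall architecture coincides with the paper's: both directions pass through the completion (Propositions~\ref{p2.13}, \ref{p4.4} and Lemma~\ref{l2.20}) to reduce to the compact case, both rest on Lemmas~\ref{l6.3}, \ref{l6.4} and \ref{l6.5} for necessity, and both use the construction of Example~\ref{ex3.11} for sufficiency. The one place where you genuinely diverge is the derivation of condition~\ref{t5.10:s2}: the paper argues that each upper tail \(D(X) \cap [d^{*}, \diam X]\) with \(d^{*} > 0\) is a compact set all of whose points are isolated, hence finite by the Borel--Lebesgue property, and then combines this with \(|D(X)| = \aleph_0\); you instead recursively extract maxima \(p_1 > p_2 > \cdots\) and show directly that they decrease to \(0\) and exhaust \(A \setminus \{0\}\). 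The two arguments carry the same content (your exhaustion step is exactly the finiteness of the upper tails in disguise), but yours has the advantage of producing the explicit enumeration that the paper only records afterwards in Corollary~\ref{c6.9}, while the paper's covering argument is marginally shorter. You also write out the total-boundedness verification in the sufficiency direction, which the paper dismisses as easy; that is a welcome addition, and it is all the theorem requires since the statement asks only for a totally bounded space rather than a compact one. Two trivial points worth making explicit: in the recursion, \(\max\bigl(A \setminus \{p_1, \ldots, p_n\}\bigr)\) is strictly positive because \(A\) is infinite by Lemma~\ref{l6.3}, so the removed finite set never leaves only \(\{0\}\) behind; and the inference \(0 \in A\) from condition~\ref{t5.10:s3} is legitimate precisely because the paper defines an accumulation point of a metric space to be a non-isolated \emph{point of that space}.
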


\begin{proof}
Suppose that there is an infinite totally bounded ultrametric space \((X, d)\) such that \(A = D(X)\). Let us prove the validity of \ref{t5.10:s2}. The completion \(\widetilde{X}\) of \(X\) is compact (see Proposition~\ref{p2.13}) and ultrametric (see Lemma~\ref{l2.20}). By Corollary~\ref{p4.4}, the equality \(D(X) = D(\widetilde{X})\) holds. Hence, without loss generality, we assume that \(X\) is compact. 

By Lemma~\ref{l6.3}, we have \(|D(X)| = \aleph_0\). Consequently, it suffices to show that the cardinality of 
\[
D_{d^*} := D(X) \cap [d^{*}, \diam X]
\]
is finite for every strictly positive \(d^* \in D(X)\).

By Lemma~\ref{l6.4}, the distance set \(D(X)\) is a compact subset of \(\RR\). The classical Heine---Borel theorem (from Real Analysis) claims that a subset \(S\) of \(\RR\) is compact if and only if \(S\) is closed and bounded. Thus, \(D_{d^*}\) is compact. By Lemma~\ref{l6.5}, every \(p \in D_{d^*}\) is isolated. Hence, for every \(p \in D_{d^*}\), the one-point set \(\{p\}\) is an open ball in the metric subspace \(D_{d^*}\) of \(\RR\). It is clear that the open cover 
\[
\bigl\{\{p\} \colon p \in D_{d^*}\bigr\}
\]
contains a finite subcover if and only if the cardinal number \(|D_{d^*}|\) is finite. Now the inequality \(|D_{d^*}| < \infty\) follows from Borel---Lebesgue property (Definition~\ref{d2.3}). Condition~\ref{t5.10:s2} follows.

Let us prove condition \ref{t5.10:s3}. Using Proposition~\ref{p4.4} and Lemma~\ref{l6.4} we obtain that \(A\) is a compact subset of \(\RR^{+}\). If \ref{t5.10:s3} does not hold, then \(0\) is an isolated point of \(A\). Consequently, by Lemma~\ref{l6.5}, all points of \(A\) are isolated. Since \(A\) is compact, it implies the finiteness of \(A\), contrary to \ref{t5.10:s2}. Condition~\ref{t5.10:s3} follows.

Conversely, suppose that \(A\) satisfies conditions \ref{t5.10:s2} and \ref{t5.10:s3}. First of all we note that condition~\ref{t5.10:s3} implies \(0 \in A\). Consequently, if we write \(X = A\) and define the ultrametric \(d \colon X \times X \to \RR^{+}\) as in Example~\ref{ex3.11}. Then the equality~\(A = D(X)\) holds. Using \ref{t5.10:s2} and \ref{t5.10:s3}, it is easy to prove that \((X, d)\) is compact, ultrametric and infinite.
\end{proof}

\begin{remark}\label{r6.11}
Considering the poset \((\RR^{+}, {\leqslant})\) and its subposet
\[
\{0\} \cup \left\{1+\frac{1}{n} \colon n \in \mathbb{N}\right\}
\]
(Example~\ref{ex3.5}), we see that conditions \ref{t5.10:s2} and \ref{t5.10:s3} from Theorem~\ref{t5.10} are independent of one another.
\end{remark}

\begin{example}\label{ex6.8}
Let \((x_{n})_{n \in \mathbb{N}} \subseteq \RR^{+}\) be a strictly decreasing sequence such that
\[
\lim_{n \to \infty} x_n = 0.
\]
Let us define a set \(A \subseteq \RR^{+}\) by the rule
\[
(x \in A) \Leftrightarrow (x=0 \text{ or } \exists n \in \mathbb{N} \colon x = x_n).
\]
Write \(X = A\) and let \(d \colon X \times X \to \RR^{+}\) be defined as in Example~\ref{ex3.11}. Then \((X, d)\) is a compact infinite ultrametric space satisfying the equality \(D(X, d) = A\).
\end{example}

Theorem~\ref{t5.10} and Example~\ref{ex6.8} give us the following ``constructive'' description of distance sets of all infinite totally bounded ultrametric spaces.

\begin{corollary}\label{c6.9}
The following statements are equivalent for every \(A \subseteq \RR^{+}\):
\begin{enumerate}
\item \label{c6.9:s1} There is an infinite compact ultrametric space \(X\) such that \(A = D(X)\).
\item \label{c6.9:s3} There is an infinite totally bounded ultrametric space \(X\) such that \(A = D(X)\).
\item \label{c6.9:s2} There is a strictly decreasing sequence \((x_n)_{n \in \mathbb{N}} \subseteq \RR^{+}\) such that 
\[
\lim_{n \to \infty} x_n = 0
\]
holds and the equivalence 
\[
(x \in A) \Leftrightarrow (x = 0 \text{ or } \exists n \in \mathbb{N} \colon x_n = x)
\]
is valid for every \(x \in \RR^{+}\).
\end{enumerate}
\end{corollary}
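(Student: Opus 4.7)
The plan is to establish the cycle \ref{c6.9:s1} $\Rightarrow$ \ref{c6.9:s3} $\Rightarrow$ \ref{c6.9:s2} $\Rightarrow$ \ref{c6.9:s1}, using Theorem~\ref{t5.10} and Example~\ref{ex6.8} as the main inputs. The corollary is essentially a repackaging of these results, so I do not expect any serious obstacle; the only delicate point is a brief order-theoretic bookkeeping argument in the middle implication.

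The implication \ref{c6.9:s1} $\Rightarrow$ \ref{c6.9:s3} is immediate from Proposition~\ref{p2.9}: every compact metric space is, in particular, totally bounded, so the same witnessing space $X$ works.

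For \ref{c6.9:s3} $\Rightarrow$ \ref{c6.9:s2}, I would invoke Theorem~\ref{t5.10}: the hypothesis gives an infinite totally bounded ultrametric space $(X,d)$ with $A = D(X)$, so $A$ has order type $\mathbf{1} + \bm{\omega}^{*}$ inside $(\RR^{+},{\leqslant})$ and $0$ is an accumulation point of $A$. The order-type condition means $A$ has a least element (necessarily $0$, as $0 \in A$) while $A \setminus \{0\}$ is order isomorphic to $\mathbb{Z}^{-}$; this last fact allows me to enumerate $A \setminus \{0\}$ as a strictly decreasing sequence $x_1 > x_2 > x_3 > \cdots$, where $x_1 = \max A = \diam(X,d)$ and each $x_{n+1}$ is the immediate $\leqslant$-predecessor of $x_n$ in $A \setminus \{0\}$. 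Being bounded below by $0$ and decreasing, $(x_n)$ converges to some $L \geqslant 0$; and because $0$ is an accumulation point of $A$, for every $\varepsilon > 0$ some element of $A \setminus \{0\}$ lies in $(0,\varepsilon)$, which forces $L = 0$. The required equivalence $(x \in A) \Leftrightarrow (x = 0 \text{ or } x = x_n \text{ for some } n)$ is then built into the enumeration.

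Finally, \ref{c6.9:s2} $\Rightarrow$ \ref{c6.9:s1} is precisely the content of Example~\ref{ex6.8}: given a strictly decreasing null sequence $(x_n)_{n \in \NN} \subseteq \RR^{+}$, setting $A = \{0\} \cup \{x_n : n \in \NN\}$, $X = A$, and defining $d$ by formula~\eqref{ex3.11:e1} yields an infinite compact ultrametric space with $D(X,d) = A$. (If one wished to avoid citing the example, the same conclusion is available by checking that conditions \ref{t5.10:s2} and \ref{t5.10:s3} of Theorem~\ref{t5.10} are satisfied by $A$ and then passing to the completion via Propositions~\ref{p2.13} and~\ref{p4.4}.) The hardest step, insofar as there is one, is the order-theoretic enumeration in the middle implication, but it is routine once Theorem~\ref{t5.10} is in hand.
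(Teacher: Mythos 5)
Your proof is correct and follows essentially the same route as the paper, which states the corollary as an immediate consequence of Theorem~\ref{t5.10} and Example~\ref{ex6.8} without spelling out the details. Your cycle \ref{c6.9:s1}~\(\Rightarrow\)~\ref{c6.9:s3}~\(\Rightarrow\)~\ref{c6.9:s2}~\(\Rightarrow\)~\ref{c6.9:s1} simply makes explicit the routine bookkeeping (enumerating a set of order type \(\mathbf{1}+\bm{\omega}^{*}\) with \(0\) as an accumulation point as a strictly decreasing null sequence) that the paper leaves to the reader.
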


\section{From the distance sets to compactness and separability}
\label{sec4}

Let \((X, \tau)\) be a topological space. Recall that a metric \(d \colon X \times X \to \RR^{+}\) is compatible with \(\tau\) if the equality \(\tau = \tau_d\) holds, where we denote by \(\tau_d\) a topology with the base \(\bB_{X} = \bB_{X, d}\). The topological space \((X, \tau)\) is, by definition, ultrametrizable if there is a compatible with \(\tau\) ultrametric on \(X\).

Before considering the set of all ultrametrics, which are compatible with an compact ultrametrizable topology, we will formulate some lemmas.

The first lemma is a reformulation of an elegant theorem that was obtained by Pongsriiam and Termwuttipong \cite{PTAbAppAn2014}.

\begin{lemma}\label{l4.1}
The following conditions are equivalent for every \(f \colon \RR^{+} \to \RR^{+}\):
\begin{enumerate}
\item \label{l4.1:s1} The mapping \(X \times X \xrightarrow{d} \RR^{+} \xrightarrow{f} \RR^{+}\) is an ultrametric for every ultrametric space \((X, d)\).
\item \label{l4.1:s2} \(f\) is increasing and the equality \(f(x) = 0\) holds if and only if \(x = 0\).
\end{enumerate}
\end{lemma}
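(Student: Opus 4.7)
My plan is to prove the two implications separately, with the easy direction being (ii)$\Rightarrow$(i) and the substantive direction being (i)$\Rightarrow$(ii), both of which reduce to choosing suitable small ultrametric spaces as test cases.

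For (ii)$\Rightarrow$(i), I would simply verify the three axioms for $\rho := f \circ d$. The nondegeneracy axiom follows because $\rho(x,y) = 0$ means $f(d(x,y)) = 0$, which by hypothesis means $d(x,y)=0$, hence $x=y$; conversely $x=y$ gives $d(x,y)=0$ and $f(0)=0$. Symmetry is immediate from symmetry of $d$. For the strong triangle inequality, since $d$ is an ultrametric we have $d(x,y) \leq \max\{d(x,z), d(z,y)\}$, and applying the increasing function $f$ preserves the order so
\[
f(d(x,y)) \leq f\bigl(\max\{d(x,z), d(z,y)\}\bigr) = \max\{f(d(x,z)), f(d(z,y))\}.
\]

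For (i)$\Rightarrow$(ii), I would extract each property of $f$ by exhibiting a concrete ultrametric space where the failure of that property would violate the ultrametric axioms for $f \circ d$. To show $f(x) = 0 \Leftrightarrow x = 0$: for any fixed $t > 0$, apply hypothesis (i) to the two-point ultrametric space $(\{a,b\}, d)$ with $d(a,b) = t$ (or more systematically, consider the ultrametric on $[0,\infty)$ from Example~\ref{ex3.11}, which realizes every nonnegative real as a distance between distinct points). The nondegeneracy axiom for $f \circ d$ forces $f(t) \neq 0$ when $t > 0$ and $f(0) = 0$. To show $f$ is increasing: given $0 \leq x_1 \leq x_2$, consider the three-point set $\{a, b, c\}$ with $d(a,b) = d(a,c) = x_2$ and $d(b,c) = x_1$ (the case $x_1 = 0$ collapsing $b=c$ into a two-point space, and $x_1 = x_2 = 0$ being trivial); this is an ultrametric since all triangles are isosceles with the smallest side $x_1 \leq x_2$. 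Applying (i), $f \circ d$ is an ultrametric, so the strong triangle inequality gives
\[
f(x_1) = f(d(b,c)) \leq \max\{f(d(a,b)), f(d(a,c))\} = f(x_2).
\]

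The main obstacle, though it is minor, is making sure the test spaces used in the (i)$\Rightarrow$(ii) direction are genuinely ultrametric (so that the hypothesis applies) and checking the degenerate edge cases where some prescribed distance is zero. Once the right ultrametric models are chosen — a two-point space to pin down the zero set of $f$ and an isosceles three-point space to extract monotonicity — both implications reduce to inspecting a single application of the strong triangle inequality.
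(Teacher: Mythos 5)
Your proposal is correct. Note, however, that the paper does not prove this lemma at all: it is stated as a reformulation of a theorem of Pongsriiam and Termwuttipong \cite{PTAbAppAn2014} and is used as a black box, so there is no internal proof to compare against. Your argument is the standard (and essentially the original) one. The direction \ref{l4.1:s2} \(\Rightarrow\) \ref{l4.1:s1} is a direct verification, the only point worth making explicit being the identity \(f(\max\{a,b\}) = \max\{f(a), f(b)\}\) for increasing \(f\), which you use to pass from \(f(d(x,y)) \leqslant f(\max\{d(x,z), d(z,y)\})\) to the strong triangle inequality for \(f \circ d\). The converse direction correctly extracts the zero-set condition from a two-point test space and monotonicity from the isosceles three-point space with legs \(x_2\) and base \(x_1 \leqslant x_2\); you rightly verify that this configuration satisfies the strong triangle inequality (base no greater than legs) so that hypothesis \ref{l4.1:s1} applies, and you handle the degenerate case \(x_1 = 0\) by falling back on \(f(0) = 0\). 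One small remark: the word ``increasing'' in condition \ref{l4.1:s2} must be read as non-decreasing (your proof only yields \(f(x_1) \leqslant f(x_2)\), and strict monotonicity would make the equivalence false, e.g.\ for \(f\) constant and positive on \((0, \infty)\)); your argument is consistent with this reading. A self-contained proof along your lines could reasonably be added to the paper in place of the bare citation.
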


\begin{remark}\label{r4.3}
The functions satisfying condition~\ref{l4.1:s1} of Lemma~\ref{l4.1} are called ultrametric preserving. These functions together with the construction of ultrametrics given in \cite{DLPS2008TaiA} are played a critical role in the proofs of main results of the paper. Some future information on ultrametric preserving functions can be found in \cite{BDSa2020, SKPMS2020, KP2018MS, KTTJM2007, Dov2020MS, VD2019a, JTRRACEFNSAMR2020, KPS2019IJoMaCS}
\end{remark}

\begin{lemma}\label{l4.2}
Let \((X, \rho)\) be an infinite ultrametric space. Then there is an ultrametric \(d \colon X \times X \to \RR^{+}\) such that \(\tau_d = \tau_{\rho}\) and the distance set \(D(X, d)\) has the order type \(\mathbf{1} + \bm{\omega}^{*}\).
\end{lemma}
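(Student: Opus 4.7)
The plan is to split the argument into two cases according to whether $0$ is an accumulation point of $D(X,\rho)$ in the Euclidean topology on $\RR^{+}$.

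\textbf{Case 1:} Suppose $0$ is an accumulation point of $D(X,\rho)$. Then there is a strictly decreasing sequence $t_1 > t_2 > t_3 > \cdots$ in $D(X,\rho) \setminus \{0\}$ with $\lim_{n \to \infty} t_n = 0$, and I would define $f \colon \RR^{+} \to \RR^{+}$ by $f(0) = 0$, $f(t) = 1$ for $t > t_1$, and $f(t) = 2^{-n+1}$ whenever $t \in (t_{n+1}, t_n]$ with $n \geq 1$. This $f$ is non-decreasing and satisfies $f(x) = 0 \Leftrightarrow x = 0$, so Lemma~\ref{l4.1} applies and $d := f \circ \rho$ is an ultrametric on $X$. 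The additional property $\lim_{t \to 0^{+}} f(t) = 0$, together with $f(s) > 0$ for all $s > 0$, yields $\tau_d = \tau_\rho$: for every $y \in B^{\rho}_{s}(x)$ monotonicity of $f$ gives $B^{d}_{f(s)}(y) \subseteq B^{\rho}_{s}(y) = B^{\rho}_{s}(x)$, and for every $y \in B^{d}_{r}(x)$ a $\rho$-ball around $y$ inside $B^{d}_{r}(x)$ is produced using $\lim_{t \to 0^{+}} f(t) = 0$ if $y = x$ and the strong triangle inequality together with monotonicity (with radius $\rho(x,y)$) if $y \neq x$. Since $f(t_n) = 2^{-n+1}$ for each $n \geq 1$ and the range $f(\RR^{+})$ equals $\{0\} \cup \{2^{-k} : k \geq 0\}$, we conclude $D(X, d) = f(D(X, \rho)) = \{0\} \cup \{2^{-k} : k \geq 0\}$, which has order type $\mathbf{1} + \bm{\omega}^{*}$ by Example~\ref{ex3.5}.

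\textbf{Case 2:} Suppose $\inf(D(X,\rho) \setminus \{0\}) > 0$. Then $(X, \rho)$ is uniformly discrete and $\tau_\rho$ is the discrete topology, so it is enough to construct an ultrametric $d$ on $X$, entirely ignoring $\rho$, whose induced topology is discrete and whose distance set has order type $\mathbf{1} + \bm{\omega}^{*}$. Since $X$ is infinite I would partition $X = \bigsqcup_{n \in \NN} X_n$ into countably many nonempty parts, write $n(x) \in \NN$ for the unique index with $x \in X_{n(x)}$, and set $d(x, x) = 0$ and $d(x, y) = 2^{-\min(n(x), n(y))}$ for $x \neq y$. The strong triangle inequality for $d$ reduces to $\min(n(x), n(z)) \geq \min\bigl(\min(n(x), n(y)), \min(n(y), n(z))\bigr)$. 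Every point $x$ satisfies $d(x, y) \geq 2^{-n(x)}$ for all $y \neq x$, so $\{x\} = B^{d}_{2^{-n(x)}}(x) \in \tau_d$, hence $\tau_d$ is the discrete topology and coincides with $\tau_\rho$. Finally, choosing $y_n \in X_n$ and $z_n \in X_{n+1}$ for each $n \geq 1$ yields $d(y_n, z_n) = 2^{-n}$, whence $D(X, d) = \{0\} \cup \{2^{-n} : n \geq 1\}$, again of order type $\mathbf{1} + \bm{\omega}^{*}$.

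The chief obstacle is structural rather than technical: composition with an increasing function, which is the only ultrametric-preserving operation supplied by Lemma~\ref{l4.1}, can only produce distances drawn from the range $f(D(X,\rho))$. When $D(X,\rho) \setminus \{0\}$ is bounded away from $0$, no such $f$ can simultaneously furnish an infinite strictly descending sequence in its image and satisfy $\lim_{t \to 0^{+}} f(t) = 0$ (which is needed for $\tau_d = \tau_\rho$), so the $f \circ \rho$ strategy inevitably breaks down. What rescues the argument in that regime is the observation that $\tau_\rho$ must then be discrete, which liberates us to build $d$ without any reference to $\rho$, using only the infinite cardinality of $X$.
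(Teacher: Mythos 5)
Your proof is correct and follows essentially the same strategy as the paper's: a two-case split in which the ``discrete'' case is handled by a partition-based ultrametric with distance set $\{0\}\cup\{2^{-n}\}$ (the paper uses $\max\{1/n,1/m\}$ in place of $2^{-\min(n,m)}$, which is the same thing) and the remaining case by composing $\rho$ with an ultrametric-preserving step function via Lemma~\ref{l4.1}. The only difference is cosmetic: you split on whether $0$ is an accumulation point of $D(X,\rho)$, while the paper splits on whether $(X,\rho)$ has an accumulation point, so a discrete space whose distance set accumulates at $0$ lands in your composition case rather than your partition case --- both constructions work for it.
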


\begin{proof}
Suppose first that \((X, \rho)\) is discrete. Let \(\{X_n \colon n \in \NN\}\) be a partitions of \(X\), i.e., 
\begin{equation}\label{l4.2:e1}
X = \bigcup_{n \in \NN} X_n \quad \text{and} \quad X_{n_1} \neq \varnothing = X_{n_1} \cap X_{n_2}
\end{equation}
hold for all distinct \(n_1\), \(n_2 \in \NN\). Similarly to \eqref{ex3.11:e1} we define a mapping \(d \colon X \times X \to \RR^{+}\) as
\begin{equation}\label{l4.2:e2}
d(x, y) = \begin{cases}
0 & \text{if } x = y,\\
\max\left\{\frac{1}{n}, \frac{1}{m}\right\} & \text{if \(x \neq y\) and \(x \in X_n\), \(y \in X_m\)}.
\end{cases}
\end{equation}
It is clear that \(d\) is a symmetric nonnegative mapping and \(d(x, y) = 0\) iff \(x = y\). We claim that the strong triangle inequality
\begin{equation}\label{l4.2:e5}
d(x, y) \leqslant \max \{d(x, z), d(z, y)\}
\end{equation}
holds for all \(x\), \(y\), \(z \in X\). Indeed, it suffices to prove \eqref{l4.2:e5} when the points \(x\), \(y\) and \(z\) are pairwise distinct (in opposite case inequality \eqref{l4.2:e5} is evidently valid). Let \(x\), \(y\), \(z\) be pairwise distinct and \(x \in X_p\), \(y \in X_q\), \(z \in X_l\) for some not necessarily different \(p\), \(q\), \(l \in \NN\). Then using \eqref{l4.2:e2} we see that \eqref{l4.2:e5} is equivalent to the true inequality 
\begin{equation}\label{l4.2:e6}
\max \left\{\frac{1}{p}, \frac{1}{q}\right\} \leqslant \max \left\{\max \left\{\frac{1}{p}, \frac{1}{l}\right\}, \max \left\{\frac{1}{l}, \frac{1}{q}\right\}\right\}.
\end{equation}
Hence, \(d\) is an ultrametric on \(X\). If \(x_0\) is an arbitrary point of \(X\), then there is \(n \in \NN\) such that \(x_0 \in X_n\). Now \eqref{l4.2:e2} implies that the inequality \(d(x_0, x) \geqslant \frac{1}{n}\) holds for every \(x \in X\). Hence, \((X, d)\) is a discrete ultrametric space, that implies \(\tau_d = \tau_{\rho}\). From~\eqref{l4.2:e1} and \eqref{l4.2:e2} follows the equality
\[
D(X, d) = \left\{\frac{1}{n} \colon n \in \NN\right\} \cup \{0\}.
\]
Thus, \(D(X, d)\) has the order type \(\mathbf{1} + \bm{\omega}^{*}\) (see Example~\ref{ex3.5}).

If \((X, \rho)\) contains an accumulation point \(p\), then there is a sequence \((x_n)_{n \in \NN} \subseteq X\) such that \(\lim_{n \to \infty} x_n = p\). Hence, 
\begin{equation}\label{l4.2:e3}
(\rho(x_n, p))_{n \in \NN} \subseteq D(X, d)
\end{equation}
and \(0\) is an accumulation point of the set \(D(X, d)\) with respect to the standard Euclidean metric on \(\RR^{+}\). Let us define a function \(f \colon \RR^{+} \to \RR^{+}\) as
\begin{equation}\label{l4.2:e4}
f(t) = \begin{cases}
0 & \text{if } t = 0,\\
1 & \text{if } t \geqslant 1,\\
\frac{1}{n-1} & \text{if } \frac{1}{n-1} < t \leqslant \frac{1}{n}, \quad n \in \NN.
\end{cases}
\end{equation}
Let us denote by \(d\) the mapping \(X \times X \xrightarrow{\rho} \RR^{+} \xrightarrow{f} \RR^{+}\). Since \(f\) satisfies condition~\ref{l4.1:s2} of Lemma~\ref{l4.1}, \(d\) is an ultrametric on \(X\). Using~\eqref{l4.2:e4} it is easy to see that, for every \((y_n)_{n \in \NN}\) and every \(z \in X\), we have \(\lim_{n \to \infty} d(y_n, z) = 0\) if and only if \(\lim_{n \to \infty} \rho(y_n, z) = 0\). It implies the equality \(\tau_d = \tau_{\rho}\). Moreover, from~\eqref{l4.2:e3} and definition of \(d\) it follows that \((0, \infty) \cap D(X, d)\) is an infinite subset of the set \(\left\{\frac{1}{n} \colon n \in \NN\right\}\). All infinite subposets of \(\left\{\frac{1}{n} \colon n \in \NN\right\}\) has the order type \(\bm{\omega}^{*}\). Hence, the poset \(D(X, d)\),
\[
D(X, d) = \{0\} \cup \bigl((0, \infty) \cap D(X, d)\bigr),
\]
has the order type \(\mathbf{1} + \bm{\omega}^{*}\).
\end{proof}

In the next lemma we present a well-known property of open ultrametric balls.

\begin{lemma}\label{l4.3}
Let \((X, d)\) be an ultrametric space. Then, for every \(B_r(c) \in \bB_{X}\) and every \(a \in B_r(c)\), we have \(B_r(c) = B_r(a)\).
\end{lemma}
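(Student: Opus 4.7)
The plan is to prove the two inclusions $B_r(a) \subseteq B_r(c)$ and $B_r(c) \subseteq B_r(a)$ separately, using the strong triangle inequality each time. The key observation is that the hypothesis $a \in B_r(c)$ means $d(a,c) < r$, and by symmetry of $d$ this is equivalent to $d(c,a) < r$, i.e., $c \in B_r(a)$. So the situation is actually symmetric in $a$ and $c$, and it suffices to prove just one inclusion.

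For the inclusion $B_r(a) \subseteq B_r(c)$, I would pick an arbitrary $x \in B_r(a)$, so that $d(x,a) < r$. Combined with $d(a,c) < r$, the strong triangle inequality~\eqref{d1.1:e3} yields
\[
d(x,c) \leqslant \max\{d(x,a),\, d(a,c)\} < r,
\]
so $x \in B_r(c)$. The reverse inclusion $B_r(c) \subseteq B_r(a)$ follows by the same argument with the roles of $a$ and $c$ interchanged, which is legitimate precisely because $c \in B_r(a)$ as noted above.

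There is no real obstacle here; the statement is essentially a direct consequence of the strong triangle inequality and the symmetry of $d$, and the entire proof is two applications of~\eqref{d1.1:e3}.
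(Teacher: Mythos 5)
Your proof is correct and complete: the two inclusions follow from the strong triangle inequality exactly as you write, and the observation that \(a \in B_r(c)\) is equivalent to \(c \in B_r(a)\) legitimately reduces the work to a single inclusion. The paper itself does not prove this lemma but instead cites Proposition~18.4 of Schikhof's book; your argument is the standard one found there, so nothing further is needed.
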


For the proof of this lemma see, for example, Proposition~18.4 \cite{Sch1985}.

\begin{corollary}\label{c4.4}
Let \((X, d)\) be an ultrametric space. Then, for all \(B_{r_1}(c_1)\) and \(B_{r_2}(c_2)\), we have
\begin{equation}\label{c4.4:e1}
B_{r_1}(c_1) \subseteq B_{r_2}(c_2)
\end{equation}
whenever \(B_{r_1}(c_1) \cap B_{r_2}(c_2) \neq \varnothing\) and \(0 < r_1 \leqslant r_2 < \infty\). In particular, for all \(B_{r}(c_1)\) and \(B_{r}(c_2)\) and every \(r >0\) we have
\begin{equation}\label{c4.4:e2}
B_{r}(c_1) = B_{r}(c_2)
\end{equation}
whenever \(B_{r}(c_1) \cap B_{r}(c_2) \neq \varnothing\).
\end{corollary}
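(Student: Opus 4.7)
The corollary is an immediate consequence of Lemma~\ref{l4.3}, which says that every point of an open ultrametric ball serves as a center. The plan is to exploit this ``every point is a center'' property to translate both balls to a common center lying in their intersection, at which point the inclusion becomes the trivial comparison of balls of different radii around the same point.

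\textbf{Key steps.} First, assume $B_{r_1}(c_1) \cap B_{r_2}(c_2) \neq \varnothing$ and pick any $a$ in this intersection. Since $a \in B_{r_1}(c_1)$, Lemma~\ref{l4.3} yields $B_{r_1}(c_1) = B_{r_1}(a)$; similarly, $a \in B_{r_2}(c_2)$ gives $B_{r_2}(c_2) = B_{r_2}(a)$. Now the inclusion $B_{r_1}(a) \subseteq B_{r_2}(a)$ is immediate from the definition of open ball together with the hypothesis $r_1 \leqslant r_2$: if $d(a,x) < r_1$, then $d(a,x) < r_2$. Chaining the three equalities/inclusions gives \eqref{c4.4:e1}. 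For the ``in particular'' part, assume $r_1 = r_2 = r$ and $B_r(c_1) \cap B_r(c_2) \neq \varnothing$; applying the first part once with $(r_1,r_2) = (r,r)$ gives $B_r(c_1) \subseteq B_r(c_2)$, and applying it again with $c_1$ and $c_2$ swapped (the intersection condition is symmetric) gives the reverse inclusion. Together they yield \eqref{c4.4:e2}.

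\textbf{Potential obstacles.} There are essentially none: the only non-trivial input is Lemma~\ref{l4.3}, which is already stated and referenced. The rest is a one-line manipulation, so the main job is simply to present the argument clearly, making explicit that Lemma~\ref{l4.3} applies to both balls simultaneously through the common point $a$. No strong triangle inequality needs to be invoked directly, since it is already absorbed in Lemma~\ref{l4.3}.
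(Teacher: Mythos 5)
Your proof is correct and uses the same key idea as the paper: recentering both balls at a common point \(a\) of the intersection via Lemma~\ref{l4.3}. The only inessential difference is the order of deduction --- you prove the inclusion \eqref{c4.4:e1} directly and obtain the equality \eqref{c4.4:e2} as the special case \(r_1 = r_2\), whereas the paper first establishes \eqref{c4.4:e2} and then derives \eqref{c4.4:e1} from it via \(B_{r_1}(c_1) \subseteq B_{r_2}(c_1) = B_{r_2}(c_2)\).
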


\begin{proof}
Let \(B_r(c_1) \cap B_r(c_2) \neq \varnothing\) hold and \(r > 0\) be fixed. Then, by Lemma~\ref{l4.3}, we obtain the equalities 
\[
B_r(c_1) = B_r(a)\quad \text{and} \quad B_r(c_2) = B_r(a)
\]
for every \(a \in B_r(c_1) \cap B_r(c_2)\). Equality~\eqref{c4.4:e2} follows.

If we have \(0 < r_1 \leqslant r_2 < \infty\) and \(B_{r_1}(c_1) \cap B_{r_2}(c_2) \neq \varnothing\), then \eqref{c4.4:e2} implies the equality \(B_{r_2}(c_1) = B_{r_2}(c_2)\). Now \eqref{c4.4:e1} follows from the last equality and the inclusion \(B_{r_1}(c_1) \subseteq B_{r_2}(c_1)\).
\end{proof}

\begin{lemma}\label{l4.6}
Let \((X, d)\) be an ultrametric space, let \(r > 0\) be fixed, and let \(C\) be a nonempty subset of \(X\). Then there is \(C^{1} \subseteq C\) such that
\begin{equation}\label{l4.6:e1}
\bigcup_{c \in C} B_r(c) = \bigcup_{c \in C^{1}} B_r(c)
\end{equation}
and \(B_r(c_1) \cap B_r(c_2) = \varnothing\) whenever \(c_1\) and \(c_2\) are distinct points of \(C^{1}\).
\end{lemma}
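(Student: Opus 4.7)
The plan is to exploit the dichotomy for open balls of the same radius in an ultrametric space, namely that any two such balls are either equal or disjoint (Corollary~\ref{c4.4}, equation~\eqref{c4.4:e2}). Once this dichotomy is in hand, the required set $C^{1}$ will be produced by choosing exactly one representative from each equivalence class of a natural equivalence relation on $C$.

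Concretely, I would begin by defining a binary relation $\sim$ on $C$ by setting $c_1 \sim c_2$ if and only if $B_r(c_1) = B_r(c_2)$. Reflexivity and symmetry are immediate, and transitivity follows from transitivity of equality, so $\sim$ is an equivalence relation on $C$. The key observation, and the only place where the ultrametric structure is used, is that by Corollary~\ref{c4.4} we have the equivalence
\[
\bigl(c_1 \sim c_2\bigr) \Longleftrightarrow \bigl(B_r(c_1) \cap B_r(c_2) \neq \varnothing\bigr),
\]
because as soon as two balls of the same radius meet, they must coincide.

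Next, using the axiom of choice, I would pick a transversal $C^{1} \subseteq C$ that contains exactly one element from each $\sim$-equivalence class. The disjointness condition then holds by construction: if $c_1, c_2 \in C^{1}$ are distinct, they lie in different classes, so $B_r(c_1) \neq B_r(c_2)$, and the above equivalence forces $B_r(c_1) \cap B_r(c_2) = \varnothing$. For equality~\eqref{l4.6:e1}, the inclusion $\supseteq$ is trivial from $C^{1} \subseteq C$, while for $\subseteq$, given any $c \in C$ there is a unique $c' \in C^{1}$ with $c \sim c'$, so $B_r(c) = B_r(c')$ is already captured in the right-hand union.

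There is no real obstacle here: the entire content of the lemma is the ``equal or disjoint'' alternative for balls of a common radius, which has already been extracted in Corollary~\ref{c4.4}, together with a routine application of the axiom of choice to select representatives. The proof is therefore essentially a bookkeeping exercise once the equivalence relation is set up.
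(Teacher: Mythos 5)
Your proposal is correct and follows essentially the same route as the paper: both arguments rest on the ``equal or disjoint'' dichotomy of Corollary~\ref{c4.4}, set up an equivalence relation on \(C\) (the paper uses nonempty intersection of balls and derives transitivity from Corollary~\ref{c4.4}, you use equality of balls and derive the coincidence of the two relations from the same corollary), and then take a system of distinct representatives. The difference is purely cosmetic.
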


\begin{proof}
Let us define a binary relation \(\myeq\) on the set \(C\) as
\[
(c_1 \myeq c_2) \Leftrightarrow (B_r(c_1) \cap B_r(c_2) \neq \varnothing).
\]
It is clear that the relation \(\myeq\) is reflexive and symmetric,
\[
c \myeq c, \quad \text{and} \quad (a \myeq b) \Leftrightarrow (b \myeq a)
\]
for all \(a\), \(b\), \(c \in C\). Moreover, Corollary~\ref{c4.4} implies the the transitivity of \(\myeq\). Hence, \(\myeq\) is an equivalence relation on \(C\).

Using the well-known one-to-one correspondence between the equivalence relations and partitions (see, for example, \cite[Chapter~II, \S~5]{KurMost}), we can find a partition \(\widetilde{C} = \{C_i \colon i \in I\}\) of the set \(C\) generated by relation \(\myeq\). It means that, for all \(a\), \(b \in C\), we have \(a \myeq b\) iff there is \(i \in I\) such that \(a\), \(b \in C_i\) and
\begin{equation*}
\bigcup_{i \in I} C_i = C, \quad C_i \cap C_j = \varnothing \quad \text{and} \quad C_k \neq \varnothing
\end{equation*}
whenever \(i\), \(j\), \(k \in I\) and \(i \neq j\). Now if we define \(C^{1} \subseteq C\) as a system of distinct representatives for \(\widetilde{C}\),
\[
C^{1} = \{c_i \colon i \in I \text{ and } c_i \in C_i\},
\]
then, using Corollary~\ref{c4.4}, we obtain the equality
\begin{equation}\label{l4.6:e3}
B_r(c_i) = \bigcup_{c \in C_i} B_r(c)
\end{equation}
for every \(i \in I\). In addition, from the definition of \(\myeq\) it follows that \(B_r(c_{i_1})\) and \(B_r(c_{i_2})\) are disjoint for all distinct \(i_1\), \(i_2 \in I\). To complete the proof it suffices to note that \eqref{l4.6:e1} follows from the definition of \(\widetilde{C}\) and \eqref{l4.6:e3}.
\end{proof}

\begin{theorem}\label{t4.5}
Let \((X, \tau)\) be an ultrametrizable infinite topological space. Then the following conditions are equivalent:
\begin{enumerate}
\item\label{t4.5:s1} The space \((X, \tau)\) is compact.
\item\label{t4.5:s2} The distance set \(D(X, d)\) of an ultrametric space \((X, d)\) has the order type \(\mathbf{1} + \bm{\omega}^{*}\) whenever \(d\) is compatible with \(\tau\).
\item\label{t4.5:s3} If \((X, d)\) and \((X, \rho)\) are ultrametric spaces such that 
\[
\tau_d = \tau = \tau_{\rho},
\]
then the distance sets \(D(X, d)\) and \(D(X, \rho)\) are order isomorphic as subposets of \((\RR^{+}, {\leqslant})\).
\item\label{t4.5:s4} The distance set \(D(X, d)\) has the largest element whenever \(d\) is a compatible with \(\tau\) ultrametric.
\end{enumerate}
\end{theorem}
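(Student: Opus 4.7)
My plan is to prove the cyclic chain $(i) \Rightarrow (ii) \Rightarrow (iii) \Rightarrow (iv) \Rightarrow (i)$. The first three implications are brief. For $(i) \Rightarrow (ii)$: a compact ultrametric space is totally bounded, so Theorem~\ref{t5.10} forces $D(X, d)$ to have order type $\mathbf{1} + \bm{\omega}^{*}$. For $(ii) \Rightarrow (iii)$: by Definition~\ref{d2.8}, two totally ordered sets of the same order type are order isomorphic. For $(iii) \Rightarrow (iv)$: Lemma~\ref{l4.2} supplies a compatible ultrametric $\rho_0$ with $D(X, \rho_0)$ of order type $\mathbf{1} + \bm{\omega}^{*}$, which has a largest element, and (iii) transports this largest element across the order isomorphism to $D(X, d)$ for every compatible $d$.

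The main work is $(iv) \Rightarrow (i)$, which I prove by contrapositive. Assume $(X, \tau)$ is not compact. Proposition~\ref{p2.9} then gives a sequence $(x_n)_{n \in \NN} \subseteq X$ with no convergent subsequence; after passing to a subsequence the $x_n$ may be taken distinct, and $F := \{x_n : n \in \NN\}$ is closed and discrete in $(X, \tau)$. Fix a compatible ultrametric $\rho$ with $\rho \leq 1$ (truncate if necessary). The no-convergent-subsequence property forces $r_n := \inf_{m \neq n} \rho(x_n, x_m) > 0$, so I pick $s_n \in (0, 1/2]$ with $s_n \leq r_n$; the balls $U_n := B_{s_n}^{\rho}(x_n)$ are then pairwise disjoint by Corollary~\ref{c4.4}.

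Setting $W := \bigcup_n U_n$ and $V := X \setminus W$, I define $d$ on $X$ by $d(x, y) = \rho(x, y)$ when $x, y$ lie in the same block of the partition $X = V \sqcup \bigsqcup_n U_n$, and $d(x, y) = \max\{w_a, w_b\}$ when $x$ lies in block $a$ and $y$ in block $b \neq a$, with weights $w_V := 1$ and $w_n := 2 - 1/n$ for $n \in \NN$. Checking that $d$ is an ultrametric is routine: inside one block this reduces to the strong triangle inequality for $\rho$; between three distinct blocks it reduces to $\max\{w_a, w_b\} \leq \max\{w_a, w_b, w_c\}$; and the mixed cases require $\rho(x, y) \leq \max\{w_a, w_b\}$, which holds because $\rho \leq 1 \leq w_\alpha$ for every block $\alpha$. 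The distance set $D(X, d)$ contains each $w_n = 2 - 1/n$ as an inter-block distance and sits inside $[0, 1] \cup \{2 - 1/k : k \geq 2\}$, so $\sup D(X, d) = 2$ is not attained and $D(X, d)$ has no largest element, contradicting (iv).

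The technical heart is verifying that $d$ is compatible with $\tau$, which reduces to two claims: $W$ is closed, and any $\rho$-convergent sequence $y_k \to p \in V$ lies eventually in $V$. Both collapse to the same argument. Assuming otherwise, we produce $p \notin \bigcup_n U_n$ such that arbitrarily small balls $B_{\varepsilon}^{\rho}(p)$ meet some $U_n$; Corollary~\ref{c4.4} then forces $U_n \subseteq B_{\varepsilon}^{\rho}(p)$ (since $B_{\varepsilon}^{\rho}(p) \subseteq U_n$ would place $p \in U_n$), giving $\rho(x_n, p) < \varepsilon$. Letting $\varepsilon \to 0$ extracts a subsequence of $(x_n)$ converging to $p$, contradicting the defining property of $(x_n)$. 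This essential use of the no-convergent-subsequence hypothesis is the principal hurdle in the proof.
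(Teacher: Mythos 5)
Your proof is correct, and the first three implications coincide with the paper's. The interesting divergence is in \(\ref{t4.5:s4} \Rightarrow \ref{t4.5:s1}\). The paper splits this into two cases --- \((X,\rho)\) not totally bounded, and \((X,\rho)\) totally bounded but incomplete --- and in each case builds (via Lemma~\ref{l4.6}, and in the second case via a rather delicate nested-ball construction whose leftover pieces must be shown to form a countably infinite partition of \(X\)) a partition of \(X\) into infinitely many disjoint \emph{balls}, then reassigns inter-ball distances through a surjection \(g \colon \mathcal{B} \to (r_1, 2r_1) \cap \QQ\). You instead invoke Proposition~\ref{p2.9} once to get a sequence with no convergent subsequence, surround its terms by disjoint balls \(U_n\), and lump everything else into a single block \(V\) with explicit weights \(2 - 1/n\); the price is that \(V\) is not a ball, so you must separately prove that \(V\) is \(\rho\)-open, and it is exactly there that the no-convergent-subsequence hypothesis is consumed. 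Your route is shorter and uniform (no case split on total boundedness), whereas the paper's version keeps all new distances inside \((r_1, 2r_1)\) at the scale where compactness fails and, more importantly, produces the exact construction \eqref{t4.5:e1} that is reused verbatim in the proof of Theorem~\ref{t4.8} with an uncountable-range \(g\). Two small points you should make explicit: the extraction of a convergent subsequence at the end needs the observation that infinitely many distinct indices \(n\) occur among those with \(\rho(x_n, p) < \varepsilon\) (which holds because \(p \notin W\) forces \(\rho(x_n, p) > 0\) for every fixed \(n\)), and the positivity of \(r_n = \inf_{m \neq n} \rho(x_n, x_m)\) deserves its one-line justification from the absence of convergent subsequences.
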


\begin{proof}
\(\ref{t4.5:s1} \Rightarrow \ref{t4.5:s2}\). This is valid by Theorem~\ref{t5.10}.

\(\ref{t4.5:s2} \Rightarrow \ref{t4.5:s3}\). If \ref{t4.5:s2} holds, then we have \ref{t4.5:s3} by Definition~\ref{d2.8}.

\(\ref{t4.5:s3} \Rightarrow \ref{t4.5:s4}\). By Lemma~\ref{l4.2}, there is an ultrametric \(\rho \colon X \times X \to \RR^{+}\) such that \(\tau = \tau_{\rho}\) and \(D(X, \rho)\) has the order type \(\mathbf{1} + \bm{\omega}^{*}\). If \ref{t4.5:s3} holds and an ultrametric \(d \colon X \times X \to \RR^{+}\) is compatible with \(\tau\), then \(D(X, d)\) also has the order type \(\mathbf{1} + \bm{\omega}^{*}\) by \ref{t4.5:s3} and, consequently, \(D(X, d)\) contains the largest element.

\(\ref{t4.5:s4} \Rightarrow \ref{t4.5:s1}\). Let \ref{t4.5:s4} hold. Suppose that \((X, \tau)\) is not a compact. Using Lemma~\ref{l4.2} we can find an ultrametric \(\rho \colon X \times X \to \RR^{+}\) such that \(\tau = \tau_{\rho}\) and \(\mathbf{1} + \bm{\omega}^{*}\) is the order type of \(D(X, \rho)\). Hence, \(D(X, \rho)\) has the largest element \(\diam (X, \rho)\). Thus, it suffices to find a compatible with \(\tau\) ultrametric \(d \colon X \times X \to \RR^{+}\) such that \(D(X, d)\) does not have the largest element.

We will construct a desirable ultrametric \(d \colon X \times X \to \RR^{+}\) by a modification of the ultrametric \(\rho\). Consider first the case when \((X, \rho)\) is not totally bounded. Then, by Definition~\ref{d2.10}, there exists \(r_1 > 0\) such that the open cover 
\[
\widetilde{\mathcal{B}} = \left\{B_{r_1}(x) \colon x \in X, \ B_{r_1}(x) \in \bB_{X, \rho}\right\}
\]
of \(X\) does not have any finite subcover. Using Lemma~\ref{l4.6}, we find a set \(C \subseteq X\) such that
\[
\bigcup_{c \in C} B_{r_1}(c) = X
\]
and \(B_{r_1}(c_1) \cap B_{r_1}(c_2) = \varnothing\) whenever \(c_1\) and \(c_2\) are distinct points of \(C\). Write \(\mathcal{B} = \{B_{r_1}(c) \colon c \in C\}\). Since the cover \(\widetilde{\mathcal{B}}\) has no finite subcovers, \(\mathcal{B}\) has an infinite cardinality. Consequently, there is a surjective mapping \(g \colon \mathcal{B} \to (r_1, 2r_1) \cap \QQ\). Let us define a mapping \(d \colon X \times X \to \RR^{+}\) as
\begin{equation}\label{t4.5:e1}
d(x, y) = \begin{cases}
\rho(x, y) & \parbox[t]{.45\textwidth}{if there is \(B \in \mathcal{B}\) such that \(x\), \(y \in B\),}\\
\max\{g(B^1), g(B^2)\} & \parbox[t]{.45\textwidth}{if \(x \in B^1\) and \(y \in B^2\) for distinct \(B^1\), \(B^2 \in \mathcal{B}\).}
\end{cases}
\end{equation}
We claim that \(d\) is an ultrametric on \(X\). Indeed, \(d\) is symmetric and \(d(x, y) = 0\) holds iff \(x = y\). Thus, it suffices to prove the strong triangle inequality
\begin{equation}\label{t4.5:e2}
d(x, y) \leqslant \max\left\{d(x, z), d(z, y)\right\}
\end{equation}
for all \(x\), \(y\), \(z \in X\). Let us consider all possible cases of mutual arrangement of the points \(x\), \(y\) and \(z\). If there is \(B \in \mathcal{B}\) such that \(\{x, y, z\} \subseteq B\), then from \eqref{t4.5:e1} it follows that 
\[
d(x, y) = \rho(x, y), \quad d(x, z) = \rho(x, z) \quad \text{and} \quad d(y, z) = \rho(y, z).
\]
Hence, \eqref{t4.5:e2} follows because \(\rho\) is an ultrametric. If there are distinct \(B^1\), \(B^2\), \(B^3 \in \mathcal{B}\) such that \(x \in B^1\), \(y \in B^2\) and \(z \in B^3\), then using \eqref{t4.5:e1} we can rewrite \eqref{t4.5:e2} as the obviously correct inequality
\[
\max\{g(B^1), g(B^2)\} \leqslant \max\bigl\{\max\{g(B^1), g(B^3)\}, \max\{g(B^3), g(B^2)\}\bigr\}
\]
(cf. inequality~\eqref{l4.2:e6} from the proof of Lemma~\ref{l4.2}). To complete the proof of \eqref{t4.5:e2} it suffices to consider the case when there exist some distinct \(B^1\), \(B^2 \in \mathcal{B}\) such that \(\{x, y, z\} \subseteq B^1 \cup B^2\) but 
\[
B \setminus \{x, y, z\} \neq \varnothing
\]
for every \(B \in \mathcal{B}\). Without loss of generality, we suppose that
\begin{equation}\label{t4.5:e4}
d(x, y) \geqslant d(x, z) \geqslant d(z, y) > 0
\end{equation}
and 
\begin{equation}\label{t4.5:e5}
|B^1 \cap \{x, y, z\}| = 2, \quad |B^2 \cap \{x, y, z\}| = 1.
\end{equation}
Using Lemma~\ref{l4.3} and \eqref{t4.5:e4}, \eqref{t4.5:e5} we can prove the inclusions
\[
\{y, z\} \subseteq B^1 \quad \text{and} \quad \{x\} \subseteq B^2.
\]
Hence,
\[
d(z, y) = \rho(z, y), \quad d(x, z) = d(x, y) = \max\{g(B^1), g(B^2)\}
\]
hold by \eqref{t4.5:e1}. Lemma~\ref{l4.3}, the inclusion \(\{y, z\} \subseteq B^1\), and the equality \(d(z, y) = \rho(z, y)\) imply the strict inequality
\begin{equation}\label{t4.5:e3}
d(z, y) < r_1.
\end{equation}
Now \eqref{t4.5:e2} follows from \eqref{t4.5:e3} and \(g(\mathcal{B}) \subset (r_1, 2r_1)\).

Let us prove the equality \(\tau_d = \tau_{\rho}\). The last equality holds iff, for every \((p_n)_{n \in \NN} \subseteq X\) and for every \(p \in X\), we have the equivalence
\begin{equation}\label{t4.5:e15}
\left(\lim_{n \to \infty} \rho(p_n, p) = 0\right) \Leftrightarrow \left(\lim_{n \to \infty} d(p_n, p) = 0\right).
\end{equation}

To prove \eqref{t4.5:e15}, we first note that \(\Lim_{n \to \infty} d(p_n, p) = 0\) implies \(\Lim_{n \to \infty} \rho(p_n, p) = 0\) because \(d(x, y) \geqslant \rho(x, y)\) for all \(x\), \(y \in X\) by \eqref{t4.5:e1}. Conversely, let \(\Lim_{n \to \infty} \rho(p_n, p) = 0\) hold. The family \(\mathcal{B}\) is a cover of \(X\). Hence, we can find \(B \in \mathcal{B}\) such that \(p \in B\). Since \(B\) is an open ball in \((X, \rho)\), the limit relation \(\Lim_{n \to \infty} \rho(p_n, p) = 0\) and the membership \(p \in B\) imply that \((p_n)_{n \in \NN}\) eventually belongs to \(B\). Thus, by \eqref{t4.5:e1}, we have \(\rho(p_n, p) = d(p_n, p)\) for all sufficiently large \(n\), that implies the equality \(\Lim_{n \to \infty} d(p_n, p) = 0\).

Let us consider now the case when the ultrametric space \((X, \rho)\) is totally bounded. By Corollary~\ref{c6.9}, there is a strictly decreasing sequence \((r_n)_{n \in \NN} \subseteq \RR^{+}\) such that \(\Lim_{n \to \infty} r_n = 0\) and 
\[
D(X, \rho) = \{r_n \colon n \in \NN\} \cup \{0\}.
\]
Since \((X, \rho)\) is totally bounded, we can find a finite set \(C^{1} \subseteq X\) such that
\begin{equation}\label{t4.5:e6}
X = \bigcup_{c \in C^{1}} B_{r_1}(c).
\end{equation}
Using Lemma~\ref{l4.6}, we may also suppose that \(B_{r_1}(c^1) \cap B_{r_1}(c^2) = \varnothing\) whenever \(c^1\) and \(c^2\) are different points of \(C^{1}\). Since \((X, \rho)\) is totally bounded but not compact, Proposition~\ref{p2.9} implies that there is a Cauchy sequence \((x_n)_{n \in \NN} \subseteq X\) which does not have any limit in \((X, \rho)\). Using the definition of Cauchy sequences we can find \(c_1 \in C^{1}\) such that \(x_n \in B_{r_1}(c_1)\) for all sufficiently large \(n \in \NN\). The ball \(B_{r_1}(c_1)\) is totally bounded as a subset of totally bounded space \((X, \rho)\). Consequently, there is a finite \(C^{2} \subseteq X\) satisfying
\begin{equation}\label{t4.5:e7}
B_{r_1}(c_1) \subseteq \bigcup_{c \in C^{2}} B_{r_2}(c).
\end{equation}
Without loss of generality we suppose that \(B_{r_1} (c) \cap B_{r_2} (c) \neq \varnothing\) holds for every \(c \in C^{2}\). Then, by Corollary~\ref{c4.4}, for every \(c \in C^{2}\) we have the inclusion \(B_{r_2} (c) \subseteq B_{r_1} (c_1)\) (because \(r_2 < r_1\)). Hence, \(C^{2} \subseteq B_{r_1} (c_1)\) holds and inclusion \eqref{t4.5:e7} comes to the equality
\begin{equation}\label{t4.5:e8}
B_{r_1}(c_1) = \bigcup_{c \in C^{2}} B_{r_2}(c).
\end{equation}
By Lemma~\ref{l4.6}, there is \(C^{2, 1} \subseteq C^{2}\) such that
\[
\bigcup_{c \in C^{2, 1}} B_{r_2}(c) = \bigcup_{c \in C^{2}} B_{r_2}(c)
\]
and
\begin{equation}\label{t4.5:e9}
B_{r_2}(c^1) \cap B_{r_2}(c^2) = \varnothing
\end{equation}
whenever \(c^1\) and \(c^2\) are distinct points of \(C^{2, 1}\). Replacing, if necessary, the set \(C^{2}\) by \(C^{2, 1}\), we can assume that \eqref{t4.5:e9} is satisfied whenever \(c^1\) and \(c^2\) are distinct points of \(C^{2}\). By definition of the Cauchy sequences, there is a ball \(B_{r_2}(x) \in \bB_{X, \rho}\) such that the sequence \((x_n)_{n \in \NN}\) eventually belongs to \(B_{r_2}(x)\). Since \((x_n)_{n \in \NN}\) also eventually belongs to \(B_{r_1}(c_1)\), we have
\begin{equation}\label{t4.5:e10}
B_{r_2}(x) \cap B_{r_1}(c_1) \neq \varnothing.
\end{equation}
By Corollary~\ref{c4.4}, the inequality \(r_2 < r_1\) and \eqref{t4.5:e10} imply the inclusion
\[
B_{r_2}(x) \subseteq B_{r_1}(c_1).
\]
Using \eqref{t4.5:e8} we also see that Corollary~\ref{c4.4} implies the equality \(B_{r_2}(x) = B_{r_2}(c_2)\) with an unique \(c_2 \in C^{2}\). 

Arguing similarly for every integer \(n \geqslant 3\), we can find a finite \(C^{n} \subseteq B_{r_{n-1}}(c_{n-1})\) and a point \(c_n \in C^{n}\) such that the sequence \((x_n)_{n \in \NN}\) eventually belongs to \(B_{r_{n}}(c_{n})\), and
\begin{equation}\label{t4.5:e11}
B_{r_{n-1}}(c_{n-1}) = \bigcup_{c \in C^{n}} B_{r_n}(c),
\end{equation}
and \(B_{r_n}(c^1) \cap B_{r_n}(c^2) = \varnothing\) whenever \(c^1\) and \(c^2\) are distinct points of \(C^{n}\).

Let us consider \(\mathcal{B} \subseteq \bB_{X, \rho}\) defined by the rule: \(B \in \mathcal{B}\) iff there are \(n \in \NN\) and \(c \in C^{n} \setminus \{c_n\}\) such that \(B = B_{r_n}(c)\), where \(\{c_n\}\) is the one point set consisting from the point \(c_n\). We claim that \(\mathcal{B}\) is a partition of the set \(X\). Indeed, it follows directly from the definition of the sets \(C^{n}\), \(n = 1\), \(2\), \(\ldots\), that every two \emph{different} \(B_1\), \(B_2 \in \mathcal{B}\) are disjoint. Moreover, every \(B \in \mathcal{B}\) is nonempty. Consequently, \(\mathcal{B}\) is a partition of \(X\) iff 
\begin{equation}\label{t4.5:e12}
X = \bigcup_{B \in \mathcal{B}} B
\end{equation}
holds. Suppose contrary that there is 
\[
p_0 \in X \setminus \left(\bigcup_{B \in \mathcal{B}} B\right).
\]
Then from \eqref{t4.5:e6} and 
\[
\bigcup_{c \in C^{1} \setminus \{c_1\}} B_{r_1}(c) \subseteq \bigcup_{B \in \mathcal{B}} B
\]
it follows that \(p_0 \in B_{r_1}(c_1)\). Similarly, using \eqref{t4.5:e8} we obtain that \(p_0 \in B_{r_2}(c_2)\). Using \eqref{t4.5:e11}, for arbitrary integer \(n \geqslant 3\), we can prove the membership 
\begin{equation}\label{t4.5:e13}
p_0 \in B_{r_n}(c_n)
\end{equation}
by induction. Since, for every \(n \in \NN\), the ball \(B_{r_n}(c_n)\) eventually contains the sequence \((x_n)_{n \in \NN}\), membership~\eqref{t4.5:e13} and the limit relation \(\Lim_{n \to \infty} r_n = 0\) imply the equality \(\Lim_{n \to \infty} \rho(x_n, p_0) = 0\). Thus, \((x_n)_{n \in \NN}\) converges to the point \(p_0 \in X\), contrary to the definition.

It should be also noted that the family \(\mathcal{B}\) is countably infinite. Indeed, \(\mathcal{B}\) is at most countable because \(C^{n}\) is finite for every \(n \in \NN\). If \(\mathcal{B}\) is finite, then there is \(n_0 \in \NN\) such that
\[
B_{r_{n_0}} (c_{n_0}) = B_{r_{n_0+1}} (c_{n_0+1}) = \ldots.
\]
Thus, \((x_n)_{n \in \NN}\) is eventually constant and, consequently, convergent, which is impossible as was shown above.

The equality \(|\mathcal{B}| = \aleph_0\) implies that there is a bijection \(g \colon \mathcal{B} \to (r_1, 2r_1) \cap \QQ\). Now we can define a mapping \(d \colon X \times X \to \RR^{+}\) by \eqref{t4.5:e1}. Since \(\mathcal{B}\) is a cover of \(X\), the mapping \((x, y) \mapsto d(x, y)\) is correctly defined. Reasoning as above, we can show that \(d \colon X \times X \to \RR^{+}\) is an ultrametric.

Now we can easily complete the proof of the truth of \(\ref{t4.5:s4} \Rightarrow \ref{t4.5:s1}\) for both completely bounded and not completely bounded \(\rho\). Indeed, \eqref{t4.5:e1} and the equality \(g(\mathcal{B}) = (r_1, 2r_1) \cap \QQ\) imply that 
\[
(r_1, 2r_1) \cap \QQ \subseteq D(X, d) \subseteq [0, 2r_1).
\]
Hence, we have
\[
2r_1 = \diam (X, d) \notin D(X, d).
\]
Consequently, the poset \(D(X, d)\) does not have the largest element (see Example~\ref{ex2.20}).
\end{proof}

\begin{corollary}\label{c4.6}
Let \((X, \tau)\) be an ultrametrizable infinite topological space. Then \((X, \tau)\) is compact if and only if every compatible with \(\tau\) ultrametric is totally bounded.
\end{corollary}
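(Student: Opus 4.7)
The plan is to derive this as a quick corollary of Theorems~\ref{t4.5} and \ref{t5.10}, with no new constructions needed.

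For the forward implication, suppose \((X, \tau)\) is compact and let \(d\) be any ultrametric compatible with \(\tau\). Since compactness is a purely topological property and \(\tau_d = \tau\), the metric space \((X, d)\) is compact. Proposition~\ref{p2.9} then gives that \((X, d)\) is totally bounded, which is what is required.

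For the reverse implication, I would argue by contrapositive. Suppose that \((X, \tau)\) is not compact; I want to produce a compatible ultrametric that fails to be totally bounded. Since condition~\ref{t4.5:s1} of Theorem~\ref{t4.5} fails, so does condition~\ref{t4.5:s4}, so there exists a compatible with \(\tau\) ultrametric \(d \colon X \times X \to \RR^{+}\) such that the distance set \(D(X, d)\) does not contain a largest element. If \((X, d)\) were totally bounded, then, since \(X\) is infinite, Theorem~\ref{t5.10} would force \(D(X, d)\) to have the order type \(\mathbf{1} + \bm{\omega}^{*}\); but every poset of that order type has a largest element (as noted in the discussion following Definition~\ref{d6.3}), a contradiction. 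Therefore \(d\) is a compatible ultrametric that is not totally bounded, completing the contrapositive.

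There is essentially no obstacle here beyond citing the right results: the hard work is already packaged inside Theorem~\ref{t4.5} (the existence of a compatible ultrametric with no largest distance when \((X,\tau)\) is not compact) and inside Theorem~\ref{t5.10} (the order-theoretic characterisation of distance sets of infinite totally bounded ultrametric spaces). One mild subtlety to double-check is that the hypothesis ``every compatible ultrametric is totally bounded'' is genuinely used on the ultrametric \(d\) supplied by Theorem~\ref{t4.5}, rather than on the starting ultrametric used to witness ultrametrizability; this is fine because \(d\) is by construction compatible with \(\tau\).
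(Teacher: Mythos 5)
Your proof is correct and follows essentially the same route as the paper: the forward direction is the same observation that compactness passes to any compatible metric, and the reverse direction is in both cases an immediate combination of Theorem~\ref{t4.5} and Theorem~\ref{t5.10}. The only cosmetic difference is that you argue by contrapositive through condition~\ref{t4.5:s4} of Theorem~\ref{t4.5} (existence of a compatible ultrametric whose distance set has no largest element), whereas the paper argues directly through condition~\ref{t4.5:s3} (all compatible ultrametrics have order isomorphic distance sets); both are equally valid.
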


\begin{proof}
If \((X, \tau)\) is compact, then every compatible with \(\tau\) ultrametric is also compact and, consequently, totally bounded. 

Conversely, suppose every compatible with \(\tau\) ultrametric \(d \colon X \times X \to \RR^{+}\) is totally bounded. Then, by Theorem~\ref{t5.10}, all compatible with \(\tau\) ultrametrics have the order isomorphic distance sets. Hence, \((X, \tau)\) is compact by Theorem~\ref{t4.5}.
\end{proof}

\begin{remark}\label{r4.7}
It is interesting to compare Corollary~\ref{c4.6} with the following theorem: ``A metrizable topological space \((X, \tau)\) is compact if and only if every compatible with \(\tau\) metric is complete''. This result was proved by Niemytzki and Tychonoff in 1928 \cite{NTFM1928}. There is also an ultrametric modification of Niemytzki---Tychonoff theorem obtained by Yoshito Ishiki in 2020 \cite{Isha2020}.
\end{remark}

Using the ultrametric version Niemytzky--Tychonoff theorem \cite{Isha2020}, we can expand Corollary~\ref{c4.6} to the following result.

\begin{proposition}\label{p4.11}
Let \((X, \tau)\) be an ultrametrizable topological space. Then the following conditions are equivalent:
\begin{enumerate}
\item \label{p4.11:s1} \((X, \tau)\) is compact.
\item \label{p4.11:s2} Every compatible with \(\tau\) ultrametric is complete.
\item \label{p4.11:s3} Every compatible with \(\tau\) ultrametric is totally bounded.
\end{enumerate}
\end{proposition}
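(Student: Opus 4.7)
The plan is to prove the three implications $\ref{p4.11:s1}\Rightarrow\ref{p4.11:s2}$, $\ref{p4.11:s1}\Rightarrow\ref{p4.11:s3}$, together with the two converses $\ref{p4.11:s2}\Rightarrow\ref{p4.11:s1}$ and $\ref{p4.11:s3}\Rightarrow\ref{p4.11:s1}$. The forward direction from \ref{p4.11:s1} to either \ref{p4.11:s2} or \ref{p4.11:s3} is standard and essentially topological: if $d$ is compatible with the compact topology $\tau$, then $(X,d)$ is a compact metric space, so by Proposition~\ref{p2.9} it is simultaneously complete and totally bounded. No ultrametricity is needed here.

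For the converse $\ref{p4.11:s3}\Rightarrow\ref{p4.11:s1}$, the idea is to reduce to Corollary~\ref{c4.6}. If $X$ is finite, then $(X,\tau)$ is automatically compact, so nothing needs to be checked. If $X$ is infinite, Corollary~\ref{c4.6} states precisely that compactness is equivalent to every compatible ultrametric being totally bounded, so \ref{p4.11:s3} forces \ref{p4.11:s1}. The remaining implication $\ref{p4.11:s2}\Rightarrow\ref{p4.11:s1}$ is where the recent ultrametric version of the Niemytzki--Tychonoff theorem due to Ishiki~\cite{Isha2020}, mentioned in Remark~\ref{r4.7}, enters: it says that an ultrametrizable topological space is compact if and only if every compatible ultrametric is complete. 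Invoking this theorem gives the implication immediately.

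The only mild obstacle is bookkeeping: Corollary~\ref{c4.6} is stated only for infinite $X$, so one must spell out that the finite case is trivial (a finite topological space is compact, and any ultrametric on a finite set is automatically both complete and totally bounded, so all three conditions hold vacuously). With this remark in place, the four implications close into the cycle $\ref{p4.11:s1}\Rightarrow\ref{p4.11:s2}\Rightarrow\ref{p4.11:s1}$ and $\ref{p4.11:s1}\Rightarrow\ref{p4.11:s3}\Rightarrow\ref{p4.11:s1}$, completing the proof. I do not expect any genuine technical difficulty; the proposition is essentially a packaging of Corollary~\ref{c4.6} and Ishiki's ultrametric Niemytzki--Tychonoff theorem under one roof.
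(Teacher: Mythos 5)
Your proposal is correct and follows essentially the same route as the paper: the authors likewise obtain the equivalence of \ref{p4.11:s1} and \ref{p4.11:s2} from Ishiki's ultrametric Niemytzki--Tychonoff theorem, the equivalence of \ref{p4.11:s1} and \ref{p4.11:s3} from Corollary~\ref{c4.6}, and dispose of the finite case by noting all three statements are then trivially true. Your write-up merely spells out the forward implications via Proposition~\ref{p2.9}, which the paper leaves implicit.
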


To prove Proposition~\ref{p4.11}, we only note that statements \ref{p4.11:s1}, \ref{p4.11:s2} and  \ref{p4.11:s3} are trivially true if \(X\) is a finite set.

Let us now turn to the ultrametrizable separable topological spaces.

\begin{lemma}\label{l4.10}
Let \((X, \rho)\) be an ultrametric space. Then the following conditions are equivalent:
\begin{enumerate}
\item \label{l4.10:s1} \((X, \rho)\) is not separable.
\item \label{l4.10:s2} There is \(r > 0\) such that
\[
X \setminus \bigcup_{a \in A} B_r(a) \neq \varnothing
\]
for every at most countable \(A \subseteq X\).
\end{enumerate}
\end{lemma}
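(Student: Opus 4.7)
My plan is to prove the equivalence by proving each implication via the contrapositive; no deep structural property of ultrametrics is actually required, and the argument is the standard ``countable $\varepsilon$-net'' characterization of separability for metric spaces.

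For the implication \(\neg\ref{l4.10:s1} \Rightarrow \neg\ref{l4.10:s2}\), I would assume \((X, \rho)\) is separable and pick an at most countable dense set \(A \subseteq X\). Then for any \(r > 0\) and any \(x \in X\), density forces \(B_r(x) \cap A \neq \varnothing\), so there exists \(a \in A\) with \(\rho(x, a) < r\), i.e.\ \(x \in B_r(a)\). Consequently \(X = \bigcup_{a \in A} B_r(a)\) for every \(r > 0\), which directly negates \ref{l4.10:s2}.

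For the implication \(\neg\ref{l4.10:s2} \Rightarrow \neg\ref{l4.10:s1}\), I would assume that for every \(r > 0\) there exists an at most countable \(A_r \subseteq X\) such that \(\bigcup_{a \in A_r} B_r(a) = X\). Applying this to \(r = 1/n\) for each \(n \in \NN\), set
\[
A := \bigcup_{n \in \NN} A_{1/n}.
\]
As a countable union of at most countable sets, \(A\) is at most countable. To see that \(A\) is dense, fix \(x \in X\) and any ball \(B_{\varepsilon}(x)\); choose \(n \in \NN\) with \(1/n < \varepsilon\). Since \(X = \bigcup_{a \in A_{1/n}} B_{1/n}(a)\), there is \(a \in A_{1/n} \subseteq A\) with \(x \in B_{1/n}(a)\), hence \(\rho(x, a) < 1/n < \varepsilon\), i.e.\ \(a \in B_{\varepsilon}(x)\). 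Thus \(A\) is an at most countable dense subset of \(X\), contradicting \ref{l4.10:s1}.

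There is no serious obstacle here: both directions reduce to the elementary observation that a countable dense set in a metric space gives countable \(r\)-covers for all \(r\), and conversely countable \(r\)-covers for \(r = 1/n\) can be amalgamated into a countable dense set. Since the argument never invokes the strong triangle inequality, the lemma in fact holds for arbitrary metric spaces; I would simply leave it stated in the ultrametric form used by the authors.
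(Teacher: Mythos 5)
Your proof is correct and takes essentially the same route as the paper: the first implication is immediate from density, and the second amalgamates countable $r$-covers (the paper indexes over $r \in (0,\infty) \cap \QQ$ where you use $r = 1/n$, an inessential difference) into a countable dense set. Your closing observation that the strong triangle inequality is never used, so the lemma holds for arbitrary metric spaces, is also accurate.
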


\begin{proof}
It follows directly from the definition of separability that the implication
\[
\neg \ref{l4.10:s1} \Rightarrow \neg \ref{l4.10:s2}
\]
is true, where \(\neg\) is the negation symbol. Hence, it suffices to prove the validity of the implication \(\ref{l4.10:s1} \Rightarrow \ref{l4.10:s2}\). 

Suppose that \ref{l4.10:s1} holds but for every \(r > 0\) there exists an at most countable set \(A(r) \subset X\) such that
\[
X \subseteq \bigcup_{a \in A(r)} B_r(a).
\]
Write \(\QQ^{+} = (0, \infty) \cap \QQ\). Then the set 
\[
A = \bigcup_{r \in \QQ^{+}} A(r)
\]
is at most countable and dense in \((X, \rho)\). Hence, \((X, \rho)\) is separable, contrary to \ref{l4.10:s1}.
\end{proof}

\begin{theorem}\label{t4.8}
Let \((X, \tau)\) be an ultrametrizable topological space. Then the following conditions are equivalent:
\begin{enumerate}
\item \label{t4.8:s1} \((X, \tau)\) is separable.
\item \label{t4.8:s2} The distance set \(D(X, d)\) of an ultrametric space \((X, d)\) is at most countable whenever \(d \colon X \times X \to \RR^{+}\) is compatible with \(\tau\).
\end{enumerate}
\end{theorem}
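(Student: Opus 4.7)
The plan is to prove the theorem by contraposition: the easy direction \ref{t4.8:s1} $\Rightarrow$ \ref{t4.8:s2} is exactly Corollary~\ref{c2.10}, so the work lies in showing that if $(X,\tau)$ is \emph{not} separable, then there is a compatible ultrametric $d$ whose distance set is uncountable. The natural strategy is to mimic, at the uncountable scale, the ``bumping up'' construction used in the proof of $\ref{t4.5:s4} \Rightarrow \ref{t4.5:s1}$ in Theorem~\ref{t4.5}.

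First I would fix any compatible ultrametric $\rho$ on $X$ and invoke Lemma~\ref{l4.10} to obtain an $r > 0$ such that no at most countable collection of $r$-balls can cover $X$. Next, applying Lemma~\ref{l4.6} with this $r$ and $C = X$, I would extract a set $C \subseteq X$ for which the family $\mathcal{B} = \{B_r(c) : c \in C\}$ is a pairwise disjoint cover of $X$. The choice of $r$ forces $|\mathcal{B}|$ to be uncountable: otherwise $C$ itself would be an at most countable set with $\bigcup_{c \in C} B_r(c) = X$, contradicting the conclusion of Lemma~\ref{l4.10}.

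Since $\mathcal{B}$ is uncountable and the interval $(r, 2r)$ has cardinality $2^{\aleph_0}$, I can select an uncountable subfamily $\mathcal{B}_0 \subseteq \mathcal{B}$ and an injection $g_0 \colon \mathcal{B}_0 \to (r, 2r)$, and extend $g_0$ to any function $g \colon \mathcal{B} \to (r, 2r)$. Then I define $d \colon X \times X \to \RR^{+}$ exactly as in formula~\eqref{t4.5:e1} from the proof of Theorem~\ref{t4.5}:
\[
d(x, y) = \begin{cases}
\rho(x, y) & \text{if } x, y \in B \text{ for some } B \in \mathcal{B},\\
\max\{g(B^1), g(B^2)\} & \text{if } x \in B^1, \ y \in B^2, \ B^1 \neq B^2.
\end{cases}
\]
The verification that $d$ is an ultrametric and that $\tau_d = \tau_\rho = \tau$ is word-for-word the same as the argument following \eqref{t4.5:e1} in the proof of Theorem~\ref{t4.5}: the strong triangle inequality is checked by cases depending on which balls of $\mathcal{B}$ contain the three points (using the key inequality $\rho(y,z) < r < \min\{g(B^1), g(B^2)\}$ in the two-ball case), and compatibility of topologies follows from the fact that $d \geq \rho$ globally while $d = \rho$ on each $B \in \mathcal{B}$, so a sequence converges to a point $p$ in one metric iff it does in the other.

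Finally, from the definition of $d$ one has $g(\mathcal{B}_0) \subseteq D(X, d)$, so $D(X, d)$ is uncountable, which contradicts \ref{t4.8:s2} and completes the contrapositive. The only genuinely new step beyond the tools already built in the paper is the use of Lemma~\ref{l4.10} to force $|\mathcal{B}|$ to be uncountable; the main obstacle, more bookkeeping than mathematical, is just appealing cleanly to the lengthy ultrametric/compatibility verification already carried out for formula~\eqref{t4.5:e1} in Theorem~\ref{t4.5} rather than reproducing it.
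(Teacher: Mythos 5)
Your proposal is correct and follows essentially the same route as the paper: the forward direction is Corollary~\ref{c2.10}, and for the converse the paper likewise combines Lemma~\ref{l4.10} and Lemma~\ref{l4.6} to produce an uncountable disjoint cover \(\mathcal{B}\) by \(r_0\)-balls, chooses \(g \colon \mathcal{B} \to (r_0, 2r_0)\) with uncountable range, and reuses formula~\eqref{t4.5:e1} together with the ultrametric/compatibility verification already carried out in the proof of Theorem~\ref{t4.5}. Your explicit justification that \(|\mathcal{B}|\) is uncountable (a countable \(C\) would contradict Lemma~\ref{l4.10}) is in fact slightly cleaner than the paper's one-line remark.
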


\begin{proof}
\(\ref{t4.8:s1} \Rightarrow \ref{t4.8:s2}\). This implication is valid by Corollary~\ref{c2.10}. 

\(\ref{t4.8:s2} \Rightarrow \ref{t4.8:s1}\). Suppose \ref{t4.8:s2} holds but \((X, \tau)\) is not separable. Let us consider a compatible with \(\tau\) ultrametric \(\rho \colon X \times X \to \RR^{+}\). Then the ultrametric space \((X, \rho)\) is not separable. By Lemma~\ref{l4.10} there exists \(r_0 > 0\) such that the cover 
\[
\widetilde{\mathcal{B}} = \{B_{r_0}(c) \colon c \in X\}
\]
of \(X\) does not contain any countable subcover. Using Lemma~\ref{l4.6}, we can find \(C \subseteq X\) such that
\[
\mathcal{B} = \{B_{r_0}(c) \colon c \in C\}
\]
is a cover of \(X\) consisting disjoint balls only. Since \(\mathcal{B} \subseteq \widetilde{\mathcal{B}}\) holds, the set \(\mathcal{B}\) is uncountable. Hence, there is a mapping \(g \colon \mathcal{B} \to (r_0, 2r_0)\) having the uncountable range set,
\begin{equation}\label{t4.8:e1}
|g(\mathcal{B})| > \aleph_0.
\end{equation}
If we define \(d \colon X \times X \to \RR^{+}\) by formula~\eqref{t4.5:e1}, then, as was proved earlier, \(d\) is an ultrametric compatible with \(\tau\). Since the distance set \(D(X, d)\) contains the set \(g(\mathcal{B})\), inequality \eqref{t4.8:e1} implies that \(D(X, d)\) is uncountable, contrary to~\ref{t4.8:s2}.
\end{proof}

\section{Instead of conclusion}

The concept of ultrametrics admits a natural generalization for poset-valued mappings.

Let \((\Gamma, {\preccurlyeq})\) be a partially ordered set with the smallest element \(\gamma_0\) and let \(X\) be a nonempty set.

\begin{definition}\label{d5.1}
A mapping \(d_{\Gamma} \colon X^{2} \to \Gamma\)  is an \emph{ultrametric distance}, if the following conditions hold for all \(x\), \(y\), \(z \in X\) and \(\gamma \in \Gamma\).
\begin{enumerate}
\item \label{d5.1:s1} \(d_{\Gamma}(x, y) = \gamma_0\) if and only if \(x = y\).
\item \label{d5.1:s2} \(d_{\Gamma}(x, y) = d_{\Gamma}(y, x)\).
\item \label{d5.1:s3} If \(d_{\Gamma}(x, y) \preccurlyeq \gamma\) and \(d_{\Gamma}(y, z) \preccurlyeq \gamma\), then \(d_{\Gamma}(x, z) \preccurlyeq \gamma\).
\end{enumerate}
\end{definition}

The ultrametric distances were introduced by Priess-Crampe and Ribenboim \cite{PR1993AMSUH} and studied in~\cite{DovBBMSSS2020, PR1996AMSUH, PR1997AMSUH, Rib2009JoA, Rib1996PMH}. This generalization of ultrametrics has some interesting applications to logic programming, computational logic and domain theory \cite{Kro2006TCS, PR2000JLP, SH1998IMSB}.

For the case of totally ordered \((\Gamma, {\preccurlyeq})\), condition~\ref{d5.1:s3} from the above definition becomes the following modification of the strong triangle inequality
\begin{equation}
d_{\Gamma}(x, y) \preccurlyeq \max\left\{d_{\Gamma}(x, z), d_{\Gamma}(z, y)\right\}.
\end{equation}
Now we can define the ``open balls'' as the sets
\[
B_{\gamma}(c) = \{x \in X \colon d_{\Gamma}(c, x) \prec \gamma\},
\]
\(c \in X\), \(\gamma \in \Gamma \setminus \{\gamma_0\}\), and show that the set
\[
\bB_{X, d_{\Gamma}} = \{B_{\gamma}(c) \colon \gamma \in \Gamma \setminus \{\gamma_0\}, \ c \in X\}
\]
is a base for a topology \(\tau_{d_{\Gamma}}\) if \(|\Gamma| \geqslant 2\). It seems to be interesting to describe, for fixed  \((\Gamma, {\preccurlyeq})\), the topological spaces \((X, \tau)\) which are \(\Gamma\)-ultrametrizable in the sense that \(\tau = \tau_{d_{\Gamma}}\) holds with some ultrametric distance \(d_{\Gamma} \colon X^{2} \to \Gamma\).

\begin{conjecture}[prove or disprove]\label{c5.2}
Let \((\Gamma, {\preccurlyeq})\) be a totally ordered set and let \((X, \tau)\) be a \(\Gamma\)-ultrametrizable topological space. The following conditions are equivalent:
\begin{enumerate}
\item \label{c5.2:s1} \((\Gamma, {\preccurlyeq})\) is not a poset of order type \(\mathbf{1} + \bm{\omega}^{*}\) but there is a subposet \(\Gamma_0\) of \((\Gamma, {\preccurlyeq})\) such that \(\Gamma_0\) has the order type \(\mathbf{1} + \bm{\omega}^{*}\).
\item \label{c5.2:s2} \((X, \tau)\) is compact iff \(\{d_{\Gamma}(x, y) \colon x, y \in X\}\) has the largest element whenever \(\tau = \tau_{d_{\Gamma}}\).
\end{enumerate}
\end{conjecture}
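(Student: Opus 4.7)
The plan is to follow the strategy of Theorem~\ref{t4.5} in the $\Gamma$-valued setting, splitting (ii) into its two implications; one half goes through with minor modifications, but the other reveals what I believe is a genuine gap, leading me to a disproof.

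For the implication ``$(X, \tau)$ compact $\Rightarrow \{d_\Gamma(x,y) : x,y \in X\}$ has a largest element,'' no hypothesis on $\Gamma$ beyond total ordering is needed. Arguing by contradiction, take $\delta_1 \prec \delta_2 \prec \cdots$ in the distance set with $d_\Gamma(x_n, y_n) = \delta_n$, pass by compactness to subsequences with $x_{n_k} \to x$ and $y_{n_k} \to y$, and apply the $\Gamma$-valued analog of Lemma~\ref{l3.10}: the strong triangle inequality combined with the base-of-neighbourhoods characterization of convergence ($x_n \to x$ iff for every $\gamma \succ \gamma_0$ eventually $d_\Gamma(x_n, x) \prec \gamma$) yields $d_\Gamma(x_{n_k}, y_{n_k}) = d_\Gamma(x, y)$ for large $k$ when $x \neq y$, and $d_\Gamma(x_{n_k}, y_{n_k}) \prec \delta_1$ for large $k$ when $x = y$. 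Either way, strict monotonicity of $(\delta_n)$ is contradicted.

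For the converse half I would mimic the proof of (iv)$\Rightarrow$(i) in Theorem~\ref{t4.5}. Start from a compatible $\rho_\Gamma$ built by a $\Gamma$-analog of Lemma~\ref{l4.2} using the subposet $\Gamma_0$ of order type $\mathbf{1}+\bm{\omega}^*$ supplied by~(i). If $X$ is not compact, produce an infinite family $\mathcal{B}$ of pairwise disjoint $\rho_\Gamma$-balls covering $X$ (via Lemma~\ref{l4.6}), pick a map $g\colon \mathcal{B} \to \Gamma$ whose image lies strictly above the common radius of the $\mathcal{B}$-balls and has no largest element in $\Gamma$, and set
\[
d_\Gamma(x,y) = \begin{cases} \rho_\Gamma(x,y) & \text{if $x, y$ lie in a common $B \in \mathcal{B}$,}\\ \max\{g(B^1), g(B^2)\} & \text{if $x \in B^1$, $y \in B^2$, $B^1 \neq B^2$.} \end{cases}
\]
Routine verification patterned on the proof of Theorem~\ref{t4.5} then shows that $d_\Gamma$ is a $\Gamma$-ultrametric compatible with $\tau$ whose distance set contains $g(\mathcal{B})$ and hence has no largest element.

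The delicate point --- and the reason I expect the conjecture to be false --- is the choice of $g$ with no-largest range in $\Gamma$, which condition~(i) does not guarantee. Concretely, take $\Gamma := \mathbf{1} + \bm{\omega}^{*} + \mathbf{1}$, i.e.\ $\Gamma = \{\gamma_0\} \cup \{\gamma_n : n \in \NN\} \cup \{\gamma_\infty\}$ with $\gamma_0 \prec \gamma_{n+1} \prec \gamma_n \prec \gamma_\infty$. Then~(i) holds, but every nonempty subset of $\Gamma$ contains a largest element, so the right-hand side of~(ii) is vacuously satisfied by every compatible distance on every $\Gamma$-ultrametrizable space. The choice $X = \NN$ with $d_\Gamma(n,m) = \gamma_\infty$ for $n \neq m$ yields a $\Gamma$-ultrametric inducing the discrete topology, so $(X, \tau)$ is infinite, $\Gamma$-ultrametrizable, and non-compact, whence (ii) fails while (i) holds. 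I would therefore disprove the conjecture by this counter-example and propose the repaired hypothesis ``$\Gamma$ has a subposet of order type $\mathbf{1}+\bm{\omega}^*$ and has no largest element,'' under which the construction above should complete the proof.
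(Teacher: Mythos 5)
First, a point of orientation: the paper offers no proof of this statement --- Conjecture~\ref{c5.2} is posed explicitly as an open problem (``prove or disprove''), so there is nothing in the text to compare your argument against and your attempt must stand on its own. It does not, because the counterexample on which your disproof rests is broken. In the arithmetic of order types one has \(\bm{\omega}^{*} + \mathbf{1} = \bm{\omega}^{*}\): adjoining a new largest element to a chain of type \(\bm{\omega}^{*}\) again yields a chain of type \(\bm{\omega}^{*}\). Hence \(\mathbf{1} + \bm{\omega}^{*} + \mathbf{1} = \mathbf{1} + \bm{\omega}^{*}\); concretely, the map sending \(\gamma_0 \mapsto -\infty\), \(\gamma_{\infty} \mapsto -1\) and \(\gamma_n \mapsto -(n+1)\) is an order isomorphism of your \(\Gamma\) onto \((\widehat{\mathbb{Z}}^{-}, {\preccurlyeq})\) of Example~\ref{ex3.2}. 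So your \(\Gamma\) \emph{is} a poset of order type \(\mathbf{1} + \bm{\omega}^{*}\), the first clause of condition~\ref{c5.2:s1} is violated, and in your instance both \ref{c5.2:s1} and \ref{c5.2:s2} are false --- which is consistent with the conjectured equivalence and refutes nothing.

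The underlying idea is nevertheless salvageable: what you need is a totally ordered \(\Gamma\) that is reverse well-ordered (so that every nonempty subset, in particular every distance set, has a largest element and the right-hand side of~\ref{c5.2:s2} is vacuously satisfied), that contains a subposet of type \(\mathbf{1}+\bm{\omega}^{*}\), and that is \emph{not itself} of that type. The order type \(\mathbf{2}+\bm{\omega}^{*}\), i.e.\ \((\omega+2)^{*}\), has all three properties; it is not isomorphic to \(\mathbf{1}+\bm{\omega}^{*}\) because its smallest element has an immediate successor, whereas the smallest element of \(\mathbf{1}+\bm{\omega}^{*}\) does not. With this \(\Gamma\), your discrete example \(X=\NN\) with \(d_{\Gamma}(n,m)=\max\Gamma\) for \(n\neq m\) does make \ref{c5.2:s1} true and \ref{c5.2:s2} false, and the conjecture is disproved. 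Two further cautions. Your ``forward'' half tacitly replaces compactness of \((X,\tau_{d_{\Gamma}})\) by sequential compactness when you extract convergent subsequences; for general \(\Gamma\) the topology \(\tau_{d_{\Gamma}}\) need not be metrizable, so this step needs justification. And your proposed repaired hypothesis (``\(\Gamma\) contains a copy of \(\mathbf{1}+\bm{\omega}^{*}\) and has no largest element'') is offered without verifying either implication under it, so it remains speculation rather than a corrected statement.
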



\end{document}